\documentclass[12pt]{amsart}
\usepackage{amsmath}
\usepackage{amssymb}
\usepackage{amsfonts}
\usepackage{amsthm}
\usepackage{verbatim}
\usepackage{amscd}
\usepackage{cite}
\usepackage{leftidx}
\usepackage{enumerate}
\usepackage{txfonts}
\usepackage{manfnt}
\usepackage{amscd}
\usepackage[mathscr]{eucal}
\usepackage{hyperref}
\usepackage{datetime2}
\usepackage{mathpazo}
\textwidth 4.7in \textheight 7.5in
\oddsidemargin -.3in\textwidth 6.4in\evensidemargin-.3in
%
\hfuzz 5pt 

\newtheorem{thm}{Theorem} 

\newtheorem*{thm*}{Theorem}
\newtheorem*{prop*}{Proposition}
\newtheorem{cor}[thm]{Corollary}
\newtheorem*{cor*}{Corollary}

\newtheorem{lem}[thm]{Lemma}
\newtheorem*{lem*}{Lemma}

\newtheorem*{claim*}{Claim}
\newtheorem{prop}[thm]{Proposition}

\newtheorem{defn}[thm]{Definition}
\theoremstyle{remark}

\newtheorem{rem}[thm]{Remark}
\newtheorem*{rem*}{Remark}
\newtheorem{crit-rem}[thm]{Critical remark}

\newtheorem{example}[thm]{Example}
\newtheorem*{example*}{Example}

\newtheorem*{defn*}{Definition}



\def\inv{^{-1}}

\DeclareMathOperator{\del}{\partial}

\def\refp #1.{(\ref{#1})}

\newcommand\ceil [1] {\lceil #1 \rceil}

\newcommand{\A}{\mathcal{A}}

\newcommand{\kk}{\mathbf{k}}

\newcommand{\ul}[1]{\underline {#1}}

\def\sbr #1.{^{[#1]}}
\def\sfl #1.{^{\lfloor #1\rfloor}}

\def\inv{^{-1}}
\def\?{{\bf{??}}}

\def\A{\Bbb A}

\def\P{\mathbb P}

\def\Z{\mathbb Z}

\def\O{\mathcal O}

\def\rk{\text{rk}}

\def\g{\mathfrak g}

\def\m{\mathfrak m}

\def\1/2{\frac{1}{2}}

\def\I{\mathcal{ I}}

\def\2{{[2]}}
\def\l{\ell}
\def\nl{\newline}

\def\<{\langle}
\def\>{\rangle}

\def\2{{[2]}}
\def\l{\ell}

\def\scl #1.{^{\lceil#1\rceil}}
\def\spr #1.{^{(#1)}}
\def\sbc #1.{^{\{#1\}}}

\def\subpr#1.{_{(#1)}}

\def\beq{\begin{equation*}}
\def\eeq{\end{equation*}}

\newcommand{\llog}[1]{\langle\log {#1} \rangle}

\newcommand{\mlog}[1]{\langle-\log {#1} \rangle}
\def\g3{{\Gamma\spr 3.}}

\newcommand{\eqspl}[2]{
\begin{equation}\label{#1}
\begin{split}
#2\end{split}\end{equation}}

\newcommand{\exseq}[3]{
0\to #1\to #2\to #3\to 0
}

\newcommand{\beginalphaenum}{
\begin{enumerate}\renewcommand{\labelenumi}{ }
\item \begin{enumerate}
}

\def\eex{\end{rm}\end{example}}


\pagestyle{plain}

\begin{document} 

\title{ Interpolation of curves on Fano hypersurfaces}
\author 
{Ziv Ran}


\thanks{arxiv.org 2201.09793}
\date {\DTMnow}



 \subjclass[2010]{14n25, 14j45, 14m22}
\keywords{projective hypersurfaces, curve interpolation, normal bundle, degeneration methods}

\begin{abstract}
	On a general  hypersurface  of degree $d\leq n$
	in $\P^n$ or $\P^n$ itself,
	we prove the existence of curves  of any genus and high enough degree
	depending on the genus passing
	through the expected number $t$ of general points or incident to a general
	collection of subvarieties of suitable codimensions. In some cases
	we also show that the family of curves through $t$ fixed points
has general  moduli as family of $t$-pointed curves. These results imply
positivity of certain intersection numbers on Kontsevich spaces of stable
maps. An arithmetical appendix by M. C. Chang descibes the set of numerical
characters ($n, d$, curve degree, genus) to which our results apply.
\end{abstract}
\maketitle
\tableofcontents
\normalsize
\section*{Introduction}
\subsection{Notions of interpolation}
A curve $C$ on a variety $X$ is said to be \emph{interpolating}
or to have the \emph{interpolation property} if  $C$ can 
be deformed so as to go through the expected number of general points on $X$.
Here 'expected number'
means, in terms of the normal bundle $N=N_{C/X}$,  
the largest integer $t$ such that $(n-1)t\leq\chi(N), n=\dim (X)$ or explicitly,
where $g$ denotes the genus of $C$,
\[t=[s(N)]+1-g=[\frac{C.(-K_X)+2g-2}{n-1}]+1-g.\]
This makes most sense if $H^1(N)=0$, so that $C$ moves in an unobstructed
family of the expected dimension, i.e. $h^0(N)$. The adjective 'separable' may be added if
the appropriate correspondence is separable over the symmetric product $X^{(t)}$,
which is of course automatic in char. 0.\par
A stronger property than interpolation, though equivalent in genus 0, 
is that of \emph{ultra- interpolation}. $C$ is said to be ultra-interpolating if for a sufficiently general collection 
of subvarieties $Y_i\subset X$, $C$ can be deformed so as to meet all of them, provided
\[\sum(\mathrm{codim}(Y_i)-1)t\leq\chi(N).\]
The existence of an interpolating or ultra-interpolating
curve implies positivity of certain intersection numbers on Kontsevich spaces
of stable maps, which measure the 'virtual' number of such curves.
\par 
Another property related to interpolation is that of \emph{modular interpolation}.
Given $m$ fixed general points on $X$, the family of deformations of $C$
going through them yields a family of $m$-pointed curves of genus $g$
and one may inquire whether a general member has general moduli as such.
When this holds for all $m$ up to the expected number, namely 
 \eqspl{t-eq}{t=[\chi(T_X|_C)/n]=[(-C.K_X)/n]+1-g,}   we will say that $C$
is \emph{moduli-interpolating}. Again the adjective 'separable' may be added if
the appropriate map to the moduli of $t$-pointed curves is separable.
Again there is an ultra version.\par
The various separable interpolation properties of a curve $C$ 
are  equivalent to
certain properties called balancedness or ultra-balancedness of either the 
normal bundle $N$ or the restricted ambient tangent bundle
$T=T_X|_C$. Thus separable balance is equivalent to the
property  that for a general effective divisor $D_t$ of degree $t$ on $C$ one has
either $H^1(N(-D_t))=0$ or $H^0(N(-D_t))=0$. Separable ultra balance
is equivalent to the property that for any subsheaf $N^U\subset N$ such that
$N/N^U$ is a locally free $\O_{D_t}$-module, one has either $H^1(N^U)=0$
or $H^0(N^U))=0$. Separable modular interpolation means that for $t$ as in \eqref{t-eq},
one has $H^1(T(-D_t))=0$. It is via these bundle properties that we will
approach interpolation. \par 
\subsection{Known results}
There is a fair amount of work on curve interpolation
 in the case where $C$ is rational and $X$ is a Fano manifold,
e.g. $\P^n$, a Fano hypersurface in $\P^n$ or a Grassmannian, starting with
the case of rational curves in $\P^n$, due to Sacchiero
\cite{sacchiero}; see \cite{eisenbud-vandeven-normal},   \cite{coskun-riedl}, 
\cite{caudatenormal}\cite{scroll}\cite{ran-normal}\cite{hypersurf}.
For curves of higher genus and $X=\P^n$, there are older results for elliptic curves due to
Ellingsrud and Laksov \cite{elligsrud-laksov},
Hulek \cite{hulek-elliptic} and Ein and Lazarsfeld \cite{ein-laz-normal}, and for
$n=3$ due to Perrin \cite{perrin}. More recently,
  comprehensive interpolation  results for $X=\P^n$, any $n$,  were obtained  by
A. Atanasov, E. Larson and D. Yang \cite{alyang}, who showed that
a general nonspecial
 curve of any genus  is interpolating.
To my knowledge there are no results in the literature on interpolation,
much less ultra-interpolation,  for higher-genus curves 
and ambient spaces other than $\P^n$.\par
As for modular interpolation, in case $X=\P^n$, $g=0$ and any $e\geq n$, it is
easy to see that any sufficiently general rational curve of degree $e$ is ambient-balanced.
But already for
$X$ a Grassmannian, $g=0$ and 'most' degrees $e$, there are no moduli-
interpolating curves of degree $e$ (see Example \ref{ambient-example}).
Thus for 'most' varieties $X$ one would expect some topological obstructions
in terms of degree and genus in order for a curve to be ambient-balanced.

\subsection{New results}In this paper we consider separable interpolation,
ultra interpolation and modular 
interpolation in arbitrary genus
on $\P^n$ and on general Fano hypersurfaces, i.e. hypersurfaces 
$X$ of degree $\leq n$
in $\P^n, n\geq 4$. Notably, we will show:
\begin{itemize}\item
\par  (See \S \ref{pn-sec}) In $\P^n$, the general curve of genus $g$ 
and degree $e\geq 2(g+1)n$,
 is ultra-interpolating and ultra ambient-interpolating
(see Corollary \ref{ambient-cor}).
\item
(See \S \ref{antican-sec}) On a general hypersurface of degree $n$ in $\P^n$, 
$n\geq 4$, there exist 
ultra-balanced, ultra ambient-balanced curves of genus $g$ and degree $e$
provided either $g\geq 1$ and 
$e\geq 4g(n-1)$ or $g=0$ and $e\geq n-1$.

\item
(See \S \ref{fano-sec})
On a general hypersurface of degree $d<n$ in $\P^n$, 
there exist balanced (resp. ambient-balanced) curves of any genus $g\geq 0$ and degree
$e$ provided $(n,d,g, e)$ satisfy certain arithmetical conditions. 
An  arithmetical appendix by M. C. Chang gives sufficient conditions
for these conditions to hold, showing in particular that for given $n, d, g$, 
the conditions for balance (resp. ambient balance) hold for all $e$ in
at least one arithmetic progression with difference $d(n-2)$
(resp. for infinitely many $e$) (see
Theorem \ref{fano} and the ensuing examples).
\end{itemize}
\subsection{Methods}
The method of proof builds on the one used before in \cite{caudatenormal} 
to prove balancedness for rational curves, and is likewise
based on fans and fang degenerations, degenerating the curve together with its ambient space, be it 
$\P^n$ or a hypersurface (which in turn degenerates together with its own ambient $\P^n$)
to a reducible pair. 
More specifically, we consider flags of the form
\[C_1\cup C_2\subset X_1\cup X_2\subset P_1\cup P_2\]
where $P_1$ and $P_2$ are blowups
\[P_1=B_{\P^m}\P^n, P_2=B_{P^{n-m-1}}\P^n\]
glued along the exceptional divisor $\P^{n-m-1}\times\P^m $, $X_1\cup X_2$ is a suitable Cartier
divisor on $P_1\cup P_2$ (e.g. in the proper fang case $0<m<n-1$, $X_1, X_2$
are birational transforms of hypersurfaces of degree $d$ with multiplicity $e$ (resp. $d-e$)
on $\P^m$ (resp. $\P^{n-m-1}$)); and $C_1\cup C_2$ is a lci curve on $X_1\cup X_2$.
Then the inclusion $X_1\cup X_2\subset P_1\cup P_2$ smooths to an inclusion $X\subset\P^n$
of a smooth hypersurface of degree $d$.  It can be shown
that under suitable conditions on normal bundles the
inclusion $C_1\cup C_2\subset X_1\cup X_2$
smooths to an inclusion
$C\subset X$ of  a smooth curve.
To construct good curves $C\subset X$ one is thus reduced to constructing
'good'-in a suitable sense- curves $C_1\subset X_1, C_2\subset X_2$. This is the method
used in \cite{caudatenormal} and here extended to higher genus and to ambient and ultra balancedness. 
\subsection{Contents}
 Elementary properties of balanced 
and ultra-balanced bundles are developed in \S \ref{balanced-sec}.
In \S \ref{relative-sec} we study a relative version of the tangent bundle
for a family of varieties degenerating to normal-crossing double points.
This is useful in studying moduli-interpolating families. The contents of \S\S \ref{pn-sec}, \ref{antican-sec},
\ref{fano-sec} have been described above. The Appendix by M. C. Chang studies the
roundup equations that arise mainly as one tries to construct balanced bundles as extensions,
such as those that occur in studying curves in a fibration, trying to lift a good (e.g. balanced)
curve in the base to one
in the total space.\vskip .5cm
{\bf{Acknowledgment}} I am grateful to
M. C. Chang for providing the Appendix,
as well as Example \ref{d=n-1-example},
to the referee for helpful comments,
and to R. Lazarsfeld and L. Ein for helpful references.

\section{Balanced bundles in any genus}\label{balanced-sec}
We work over an algebraically closed field or arbitrary characteristic.\par
\subsection{Basics}
Let $E$ be a vector bundle of slope $s=s(E)$ on a curve $C$ of genus $g$.
We set \[t(E)=s+1-g=\frac{\chi(E)}{\rk(E)}\] 
and call it the \emph{Euler slope} or e-slope of $E$. 
Also let \[r(E)=\deg(E)\%\rk(E)=\chi(E)\%\rk(E)\] where $\%$ denotes remainder;
this is called the \emph{remainder} of $E$.\par
For an effective divisor $D$ on $C$ we denote by $\rho_D$ the restriction map
\[\rho_D:H^0(E)\to H^0(E\otimes\O_D).\]
If $D$ is general of degree $t$ we will denote $\rho_D$ by $\rho_t$.
Here 'general' means, in case $C$ is reducible, general in some component of $C^{(t)}$.
\begin{defn} A bundle $E$ is said to be \emph{regular} if $H^1(E)=0$.\par
	$E$ is \emph{semi-balanced}
if\par (i) $E$ is generically generated; \par
(ii) $E$ is regular;\par
(iii) the restriction map
$\rho_t$
is surjective for all $t\leq t(E)$.\par
A semi-balanced bundle is \emph{balanced} if $\rho_t$ is moreover injective
for all $t\geq t(E)$.\par
 A balanced bundle is \emph{perfectly balanced} if in addition
$s$ is an integer.\end{defn}
The notion of balanced bundle can be generalized as follows.
\begin{defn}
Let $E$ be a regular, generically generated bundle. 
Given a weight vector $\ul u=(u_1,...,u_t), 0\leq u_i\leq\rk(E)$, $E$ is said to be \emph{$\ul u$-balanced} 
if there exist points
$x_1,...,x_t$, each general in some component of $C$, 
and for each $i$, a general skyscraper quotient $U_i$ of $E|_{x_i}$ of dimension $u_i$,
such that the restriction map
\[\rho_{\ul u}: H^0(E)\to H^0(\bigoplus U_i)\]
has maximal rank. $E$ is \emph{ perfectly $\ul u$-balanced} if $\rho_{\ul u}$
	is an isomorphism.\par 
$E$ is said to be \emph{ultra-balanced} if it is $\ul u$-balanced for every $\ul u$.
\qed\end{defn}
Obviously $\rho_t$ is just $\rho_{\rk(E),...,\rk(E)}$, so $E$ is balanced
iff it is $\ul u$-balanced for all scalar weight-vectors of the form
$(\rk(E),...,\rk(E))\in\Z^t, \forall t$.
Note that for $E$ regular, $\rho_t$ can be surjective only for $t\leq t(E)$. Also, note that in the definition,
we are requiring $U_i$ to be killed by the maximal ideal $\m_{x_i}$ rather than just some power of it.
\begin{rem} Regarding balancedness vs. (semi) stability.
	For a bundle of slope $s$ on a curve of genus $g$, balancedness excludes subbundles
	of degree $s+1-g$ or less while stability excludes subbundles of degree $s$ or less.
	Thus balancedness seems not implied by stability if $g>1$ though we don't have an
	explicit example of an unbalanced stable bundle. Conversely there exist direct sums of lines 
	bundles that are ultra-balanced but not stable (see Lemma \ref{sum-ultra-lem}).
	\end{rem}
\begin{lem} Suppose $E$ is generically generated. Then the following are equivalent:
	\par
	 (i) $E$ is semi-balanced;\par (ii) 
	 for general points $x_1,...,x_t\in C$ and $\forall t\leq t(E)$, 
	 we have $H^1(E(-x_1-...-x_t))=0$
or equivalently 
\[h^0(E(-x_1-...-x_t))=\chi(E(-x_1-...-x_t)) ;\]
\par(iii) $h^0(E)=\chi(E)$ and $h^0(E(-x_1-...-x_t))=h^0(E)-t.\rk(E), 
\forall t\leq t(E)$.\par
Moreover, if $E$ is semi-balanced, then $E$ is balanced iff 
$H^0(E(-x_1-...-x_t))=0, \forall t\geq t(E)$.\par
In particular, the condition that $\rho_t$ be injective or surjective depends
only on the linear equivalence class of $\sum x_i$ hence only on $t$ if $g=0$.
\end{lem}
The proof may be left to the reader.\qed\par
\begin{lem}\label{chi}
	A balanced bundle $E$ is ultra-balanced provided $\rho_{\ul u}$ is an isomorphism for all 
	weight-vectors $\ul u$
of weight	$\sum u_i=\chi(E)$.
	\end{lem}
\begin{lem}
	A generically generated bundle $E$ is $\ul u$-balanced iff, in the above notations,
	the modified bundle
	\[E^{\ul u}=\ker(E\to\bigoplus U_i)\]
	has natural cohomology, i.e. $h^0(E^{\ul u})h^1(E^{\ul u})=0$.
	\end{lem}
For rational curves, the above notion of balanced coincides with the usual:
\begin{lem}
	If $g=0$, $E$ is balanced iff $E$ is ultra-balanced iff $E\simeq b_1\O(a+1)\oplus b_0\O(a)$ for some
	$a\geq 0, b_0>0, b_1$.
	\end{lem}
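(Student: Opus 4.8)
The plan is to combine Grothendieck's splitting theorem on $\P^1$ with the cohomological reformulation of (semi-)balancedness from the preceding Lemma and the elementary facts that, on $\P^1$, $H^0(\O(m))=0$ iff $m\le -1$ and $H^1(\O(m))=0$ iff $m\ge -1$ (all characteristic-free). First I would write $E\cong\bigoplus_{i=1}^r\O(a_i)$ with $a_1\ge\dots\ge a_r$ and $r=\rk(E)$, so that $s(E)=\frac1r\sum a_i$ and $t(E)=s(E)+1=\frac1r\sum(a_i+1)$; in particular $t(E)$ is sandwiched, $a_r+1\le t(E)\le a_1+1$. The key observation is that on $\P^1$ one has $E(-x_1-\dots-x_t)\cong\bigoplus_i\O(a_i-t)$ for any $t$ distinct points, independently of the points, so $H^1(E(-x_1-\dots-x_t))=0$ holds precisely when $t\le a_r+1$, while $H^0(E(-x_1-\dots-x_t))=0$ holds precisely when $t\ge a_1+1$. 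Also $E$ is generically generated iff each $a_i\ge 0$, i.e. iff $a_r\ge 0$.

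Feeding this into the preceding Lemma gives a clean criterion: $E$ is semi-balanced iff $a_r\ge 0$ and every integer $t\le t(E)$ satisfies $t\le a_r+1$, i.e. $\lfloor t(E)\rfloor\le a_r+1$, i.e. $t(E)<a_r+2$; and such an $E$ is moreover balanced iff every integer $t\ge t(E)$ satisfies $t\ge a_1+1$, i.e. $\lceil t(E)\rceil\ge a_1+1$, i.e. $t(E)>a_1$. For the implication ($\Leftarrow$): if $E=b_1\O(a+1)\oplus b_0\O(a)$ with $a\ge 0$ and $b_0>0$, then every $a_i\in\{a,a+1\}$, so $a_r=a\ge 0$ and $t(E)=a+1+\frac{b_1}{b_0+b_1}\in[a+1,a+2)$; thus $t(E)<a+2=a_r+2$ and $t(E)>a_1$ (the latter since $t(E)>a$ always and $t(E)>a+1$ when $b_1>0$, i.e. when $a_1=a+1$), so $E$ is balanced. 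For ($\Rightarrow$): if $E$ is balanced, then generic generation forces $a_r\ge 0$, and the two inequalities above give $a_1<t(E)<a_r+2$, hence $a_1\le a_r+1$; so every $a_i$ equals $a_r$ or $a_r+1$, and taking $a:=a_r$, $b_0:=\#\{i:a_i=a_r\}>0$, $b_1:=\#\{i:a_i=a_r+1\}$ exhibits $E\cong b_1\O(a+1)\oplus b_0\O(a)$ with $a\ge 0$.

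I do not expect a genuine obstacle here — the statement is essentially a bookkeeping exercise once the splitting is available. The one point that needs a little care is that $t(E)=s(E)+1$ need not be an integer, so the surjectivity and injectivity requirements on $\rho_t$ must be compared against the integer thresholds $a_r+1$ and $a_1+1$ through $\lfloor t(E)\rfloor$ and $\lceil t(E)\rceil$ rather than against $t(E)$ itself; the sandwich $a_r+1\le t(E)\le a_1+1$ is exactly what makes the two resulting conditions $t(E)<a_r+2$ and $t(E)>a_1$ combine into the clean statement $a_1-a_r\le 1$.
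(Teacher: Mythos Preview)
Your proof is correct and follows essentially the same approach as the paper. Both arguments rest on the Grothendieck splitting together with the cohomological reformulation of balancedness: the paper's terse version identifies the minimal summand degree $a=[s(E)]$ from semi-balancedness and rules out summands of degree $\ge a+2$ via the $H^0$-vanishing, while you carry out the same computation more explicitly through the inequalities $a_1<t(E)<a_r+2$, with careful floor/ceiling bookkeeping.
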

  \begin{proof}
  If $E$ has the form $b_1\O(a+1)\oplus b_0\O(a)$ then so does a general modification of $E$,
  so $E$ is ultra-balanced. Conversely assume $E$ is balanced and let $a$ be the smallest
  degree of a line bundle quotient
  (= summand) of $E$. By semi-balancedness clearly $[s(E)]=a\geq 0, [t(E)]=[a]+1$. If $E$
  has a line bundle summand of degree $\geq a+2$ then $H^0(E(-x_1-...-x_{t+1}))\neq 0$,
  contradicting balancedness. 	
  	\end{proof}
  Note that for $g=0$ the 'test' divisor $\sum x_i$ may actually be an
  arbitrary effective divisor of degree $t$. 
  For general $g$ the injectivity or surjectivity conditions for balancedness 
  depend only on the linear equivalence class of $\sum x_i$.
  Also for general $g$, half the above characterization still holds:
  \begin{lem}
  	Suppose $E$ admits a filtration whose quotients $L_1,...,L_r$ are 
  	 line bundles such that $\deg(L_1),...\deg(L_r)\in[a, a+1]$
  	for some $a\geq 2g-1$.
  	Then $E$ is balanced.
  	\end{lem}
  \begin{proof}
  	If $D_t$ denotes a general effective divisor of degree $t$ then it is easy to check that
  	\[H^1(E(-D_t))=0, t\leq g,\]
  	\[H^0(E(-D_t))=0, t\geq g+1.\]
  	\end{proof}
  \par
  There is a version of this for ultra-balanced:
  \begin{lem}\label{sum-ultra-lem}
  	Let $E$ be a direct sum of line bundles with degrees in $[a, a+1], a\geq 2g-1$.
  	Then $E$ is ultra-balanced.
  		\end{lem}
  	\begin{proof}
  		As has been noted, if $L$ is a line bundle of degree $a\geq 2g-1$ then
  		\[H^1(L(-D_t))=0, t\leq a+1-g,\]
  		\[H^0(L(-D_t))=0, t\geq a+1-g.\]
  		We can write
  		\[E=L_1\oplus...\oplus L_s\oplus L_{s+1}\oplus...\oplus L_r\]
  		where
  		\[\deg(L_i)=\begin{cases}
  			a+1, i\leq s;\\
  			a, i>s
  			\end{cases}
  		\] and the subbundle $L_1\oplus ...\oplus L_s\subset E$ is uniquely determined.
  		Then we have $\chi(E)=ra+s$. If $\ul u=(u_1,...,u_t)$ is a weight vector, we have,
  		by generality of the quotient involved,
  		\[
  		E^{u_1}=  			L_1(-p)\oplus...\oplus L_{u_1}(-p)\oplus L_{u_1+1}\oplus...\oplus L_r,
  			  		\] where $p\in C$ is a general point, and this
  			  		is a direct sum of line bundles of degrees in $[a, a+1]$ if $u_1\leq s$
  			  		or $[a-1,a]$ if $u_1\geq s$.
  			  		Then it is easy to check, e.g.
  		by induction of the length of the weight-vector $\ul u$, that
  		\[H^1(E^{\ul u})=0, |\ul u|\leq\chi(E),\]
  		\[H^0(E^{\ul u})=0, |\ul u|\geq\chi(E).\]
  		\end{proof}
  We can similarly characterize semi-balanced bundles on $\P^1$:
  \begin{lem}
  	A globally generated bundle of slope $s$ on $\P^1$ is semi-balanced iff the smallest
  	degree of its line bundle summands is $[s]$.\qed
  	\end{lem}
  \begin{example}
  	The bundle $\O(2)\oplus 2\O$ on $\P^1$ is semi-balanced but not balanced.\par
  	\end{example}
  There is a partial extension for elliptic curves:
  \begin{lem}
  	Assume $g=1$, $E$ is generically generated and regular,  and  and that $E$ is 
  	either (1) poly-stable  or (2) semi-stable of non-integer slope.
  	Then $E$ is balanced. 
  	\end{lem}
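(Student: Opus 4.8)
The plan is to convert balancedness into two cohomology‑vanishing assertions and then read them off from the cohomology of semistable bundles on an elliptic curve. One may assume $E\neq 0$; then $E$ generically generated forces $H^0(E)\neq 0$, so with $H^1(E)=0$ and $g=1$ we get $\deg E=\chi(E)=h^0(E)>0$, hence $s:=s(E)>0$ and $t(E)=s$. By the Lemma above, since $E$ is generically generated it suffices to prove (a) $H^1(E(-D))=0$ for $D$ a general effective divisor of degree $\lfloor s\rfloor$, and (b) $H^0(E(-D))=0$ for $D$ a general effective divisor of degree $\lceil s\rceil$. The remaining values of $t$ in the definition follow by monotonicity: for $t'\le\lfloor s\rfloor$, writing a general $D$ of degree $\lfloor s\rfloor$ as $D'+D''$ with $\deg D'=t'$, the group $H^1(E(-D'))$ is a quotient of $H^1(E(-D))$; for $t'\ge\lceil s\rceil$ one has $H^0(E(-D''))\subseteq H^0(E(-D))$ whenever $D''\ge D$. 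Since (a) and (b) are open conditions on the symmetric powers of $C$, it is enough to exhibit one good divisor of each degree.

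Next I would pass from divisors to line bundles: the Abel map $\Sym^m C\to\mathrm{Pic}^m(C)$ is surjective for $m\ge 1$, so for $D$ general of degree $m$ one has $E(-D)=E\otimes M$ with $M=\O_C(-D)$ a general line bundle of degree $-m$ (and when $\lfloor s\rfloor=0$, (a) is simply the hypothesis $H^1(E)=0$). Observe that $E\otimes M$ is again semistable, and poly-stable if $E$ is, of slope $s+\deg M$.

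The key input is the cohomology of semistable bundles $F$ on the elliptic curve $C$: (i) if $\mu(F)>0$ then $H^1(F)=0$, since by Serre duality $H^1(F)\cong H^0(F^\vee)^\vee$ and a semistable bundle of negative slope has no nonzero section; (ii) if $\mu(F)<0$ then $H^0(F)=0$; (iii) if $F$ is poly-stable of slope $0$, then $H^0(F)\neq 0$ iff $\O_C$ is a direct summand of $F$, because a nonzero section of a stable slope‑$0$ bundle $W$ saturates to a sub-line-bundle of non-negative degree, which by stability is possible only when $\mathrm{rk}(W)=1$, i.e. $W\cong\O_C$. (Equivalently, by Atiyah's classification a poly-stable bundle of integer slope on $C$ is a direct sum of line bundles of that degree.) Now for (a): $E\otimes M$ is semistable of slope $s-\lfloor s\rfloor\ge 0$. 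If $s\notin\Z$ this slope is positive and (i) gives $H^1(E\otimes M)=0$ outright. If $s\in\Z$ then $E$ is poly-stable, $(E\otimes M)^\vee=E^\vee\otimes M^\vee$ is poly-stable of slope $0$, and by (iii) the space $H^1(E\otimes M)\cong H^0(E^\vee\otimes M^\vee)^\vee$ is nonzero only when $M$ is one of the finitely many line-bundle summands of $E^\vee$; a general $M\in\mathrm{Pic}^{-s}(C)$ avoids these. Statement (b) is the mirror image: $E\otimes M$ is semistable of slope $s-\lceil s\rceil\le 0$, which is negative if $s\notin\Z$ (apply (ii)) and zero if $s\in\Z$, in which case $H^0(E\otimes M)\neq 0$ only for $M$ among the finitely many duals of line-bundle summands of $E$, again avoided by a general $M$.

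The only place that needs genuine care is the integer-slope borderline inside part (1), where both vanishings sit exactly at slope $0$: there one really must use the structure of slope‑$0$ semistable bundles---the statement that $H^0\neq 0$ forces a trivial summand, or Atiyah's classification---together with the elementary fact that a general line bundle in a positive-dimensional Picard variety misses any fixed finite set. All the rest is dictated by semistability and Riemann--Roch, so I expect no obstacle beyond organizing the cases.
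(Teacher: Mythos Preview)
Your argument is correct and follows the same approach as the paper: since $g=1$ one has $t(E)=s(E)$, so the twist $E(-x_1-\cdots-x_t)$ has nonnegative (resp.\ nonpositive) slope for $t\le t(E)$ (resp.\ $t\ge t(E)$), and semistability forces the required cohomology vanishing. The paper's proof is a one-liner to this effect; you have simply filled in the details it omits, most notably the integer-slope boundary case in (1), where the slope of the twist is exactly $0$ and one genuinely needs polystability together with genericity of the points (equivalently, of $M\in\mathrm{Pic}^{-s}(C)$) to avoid a trivial summand---a point the paper's ``so the conclusion is immediate'' glosses over.
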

  \begin{proof}
  	Here $t(E)=s(E)$ and for $t\leq t(E)$ (resp. $t\geq t(E)$), $E(-x_1-...-x_t)$
  	has nonnegative (resp. nonpositive) slope so the conclusion is immediate.
  	\end{proof}
  For general $g$ one might conjecture that if $E$ is regular and generically
  generated then $E$ is balanced iff the slopes of its Harder-Narasimhan graded
  pieces are all in some length-1 interval. 
  \subsection{Splitting, modifying and matching}
  The following result is useful in constructing some semi-balanced and sometimes
  balanced bundles by smoothing from a bundle on a reducible curve. 
  \begin{lem}\label{reducible}
  	Let $C=C_1\cup C_2$ be a nodal curve such that $C_1\cap C_2$ consists of 
  	$k$ general points 
  	on $C_1$. Let $E$ be a  bundle
  	on $C$. Assume\par (i) $E$ is regular and generically generated;\par
  	 (ii) $E_i=E_{C_i}$ are balanced, $i=1,2$;\par (iii) the remainders
  	satisfy $r(E_1)+r(E_2)<r(E)$ (e.g.  $E_{C_1}$ or $E_{C_2}$ is perfectly balanced);\par 
  	(iv) $t(E_1)\geq k$.\par
  	 Then\par  (a) $E$ is semi-balanced.\par (b) Moreover if $r(E_2)=0$, $E$ is balanced.
  	\end{lem}
  \begin{proof}
  	The respective genera satisfy $g=g_1+g_2+k-1, k=C_1.C_2$ hence for the Euler slopes
  	\[t(E)=t(E_1)+t(E_2)-k.\]
  	For $t=[t(E)]$ write $t=t_1+t_2$ where \[t_1=[t(E_1)]-k, t_2=[t(E_2)].\]
  	To prove $E$ is semi-balanced, choose general points  
  	\[x_{11},...,x_{1t_1}\in C_1, x_{21},...,x_{2t_2}\in C_2.\]
  	By balancedness of $E_2$, there is a section $s_2$ of $E_2$ with 
  	arbitrary assigned values
  	at $x_{21},...,x_{2t_2}$. By balancedness of $E_1$ there is a section $s_1$ of $E_1$ with
  	arbitrary assigned values at $x_{11},...,x_{1t_1}$ and matching $s_2$ on $C_1\cap C_2$. Then $s_1$
  	and $s_2$ glue to a section of $E$ with assigned values at all the $x_{ij}$.
  	This proves (a).  Then the proof of (b)
  	is similar.
  	\end{proof}
  \begin{rem*}Note the absence of a 'general gluing' assumption over $C_1\cap C_2$. The result will be used mainly
  in case $E_2$ is perfectly balanced.\end{rem*}
The same argument also proves:
\begin{lem}
	Let $C=C_1\cup C_2$ be a nodal curve such that $C_1\cap C_2=\{p_1,...,p_k\}$ 
	consists of $k$ general points on each component..
	Let $E$ be a regular, rank-$r$ bundle on $C$ and 
	$\ul u, \ul v$ weight-vectors.
	Assume:\par (i) $E_{C_1}$ is $\ul u$-balanced;\par
	(ii) $E_{C_2}$ is $\ul v$- balanced;\par
	(iii) The restriction map $H^0(E|_{C_1}^{\ul u})\oplus H^0(E_{C_2}^{\ul v})\to H^0(E|_{p_1,...,p_k})$
	is surjective\par
	Then $E$ is $(\ul u,\ul v)$-balanced.
	\end{lem}
\begin{proof}
	$H^0(E|_{C_1}(-\sum p_i))\oplus H^0(E|_{C_2}(-\sum p_i))$ is a subspace of $H^0(E)$
	which already surjects onto $H^0(U_1\oplus...\oplus V_t)$.
	\end{proof}
	The following property of ultra-balanced bundles is immediate from the
definition but worth noting:
\begin{lem}
	Let $E$ be an ultra-balanced bundle and $E'=E^u\subset E$ a general down modification,
	i.e. kernel of a general surjection $E\to \bigoplus u_i\kk_{p_i}$, 
	such that $E'$ is regular and generically generated. Then $E'$ is ultra-balanced.
	In particular, if $D_t=\sum\limits_{i=1}^t p_i$ is a general effective divisor
	and $E(-D_t)$ is regular and generically generated, then $E(-D_t)$ is ultra-balanced.	
\end{lem}
  The following two lemmas, which are analogues of simple facts  in the case of rational curves, show that a general
  (up or down) elementary modification of a balanced bundle is balanced:



	\begin{lem}\label{down}
		Let $E$ be a balanced bundle and $E'\subset E$ a general locally corank-1 modification
		at some general points. Assume $E'$ is regular and generically 
		generated.  Then $E'$ is balanced.
		\end{lem}
	\begin{proof} 
		It suffices to prove this for modification at a single point $p$, so $E'\subset E$
		is the kernel of a general surjection $E\to\kk_p$. 
		Now if $t(E)<1$, the conclusion is obvious, so assume $t(E)\geq 1$.
		We first prove $E'$ is semi-balanced.
		Let $t=[t(E)]>0$.
		Assume first $E$ is not perfect. This easily implies that $[t(E')]=t$. 
		Then for general $x_1,...,x_t$, we get a subsheaf
		\[H^0(E(-x_1-...-x_t))\otimes\O\subset E(-x_1-...-x_t)\]
		that is not contained in the kernel of the (general) modification at $p$.
		Hence $H^0(E'(-x_1-...-x_t))$ has the expected dimension so that
		$H^0(E')\to E'_{x_1,...,x_t}$ is surjective so $E'$ is semi-balanced.\par
		If $E$ is perfect then $t(E')=t(E)-1$, therefore for a general divisor
		$x_1+...+x_{t-1}$, $H^0(E(-x_1-...-x_{t-1}))$ has the expected dimension and
		the restriction map
		\[H^0(E(-x_1-...-x_{t-1}))\to E(-x_1-...-x_{t-1})|_p\]
		is surjective. Therefore the kernel $H^0(E'(-x_1-...-x_t))$ 
		of the restriction map has the expected dimension and
		 semi-balancedness follows.\par
		 Now the injectivity statement required to show $E'$ balanced is obvious if
		 $\ceil{t(E')}=\ceil{t(E)}$. Otherwise, $t:=\ceil{t(E')}=\ceil{t(E)}-1$ and
		 the required injectivity for $E'$ follows from injectivity of
		 $H^0(E)\to E_{x_1,...,x_t, p}$.
		\end{proof}

	There is a similar statement for up modifications:
	\begin{lem}\label{up}
		Let $E$ be a balanced bundle and $E\subset E^+$ a general locally corank-1 modification
		at some general points.   Then $E^+$ is balanced.
		\end{lem}
	\begin{proof} First it is obvious that $E^+$ is regular and generically generated.
		For balancedness,
		it again suffices to prove it for the case of modification at a single point $p$,
		so $(E^+)^*\subset E^*$ is the kernel of a general surjection $E^*\to\kk_p$
and $E_p\to E^+_p$ has kernel a general 1-dimensional subspace.
		Now semi-balancedness
		is obvious if $[t(E)]=[t(E^+)]$. If not, then $t(E^+)=\ceil{t(E)}=[t(E)]+1:=t+1$
		and in particular $t(E^+)$ is an integer.
		Now $H^0(E(-x_1-...-x_t))\subset H^0(E^+(-x_1-...-x_t)$ injects to $E'(-x_1-...-x_t)|_p$
		and its image is just the inverse image of the natural map $E'\to\kk_p$.
		Therefore the kernel of $H^0(E^+(-x_1-...-x_t))\to \kk_p$ is contained in the
		latter image, hence must vanish because $H^0(E(-x_2-...-x_t-p))=0$.
		This proves $H^0(E^+(-x_1-...-x_t))\to E^+_p$ is injective, i.e. surjective,
		so $E^+$ is semi-balanced.\par
		Now to prove $E^+$ is balanced let $t+1:=\ceil{t(E^+)}\geq\ceil{t(E)}$. 
			Then $t(E)<t+1$.
			Now the kernel of $H^0(E^+(-x_1-...-x_t))\to E^+|_p$ corresponds to
			the intersection of the image of $H^0(E(-x_1-...-x_t))\to E|_p$ with the
			 kernel if $E|_p\to E^+|_p$ which is a general 1-dimensional subspace
			 and the intersection is trivial because the latter
			image is a \emph{proper} (maybe trivial) subspace thanks to $t(E)<t+1$. 
			Thus $H^0(E^+(-x_1-...-x_t-p))=0$ so $E^+$ is balanced.
				\end{proof}
	The following Lemma strengthens Lemma 25 of \cite{caudatenormal}
	and generalizes it  to arbitrary genus (note that Cases 2,3 are new even for genus 0):
	\begin{lem}\label{match}
		Let \[\exseq{E_1}{E}{E_2}\] be an exact sequence of vector bundles on
		a curve such that $E_1, E_2$ are balanced of respective slopes
		$s_1, s_2$. Assume either: \par Case 1:
		\[[s_1]=[s_2];\]
		or Case 2: 
		\[s_2=[s_1]+1;\]
		or Case 3:
		\[s_1=[s_2]+1.\]
		Then $E$ is balanced. Moreover  the slope $s=s(E)$ satisfies:
		\par Case 1: $[s]=[s_1]$;\par
		Case 2: $[s]=s_2$;\par
		Case 3:: $[s]=s_1$.
		\end{lem}
	\begin{proof} 
	Apply the Snake Lemma to the following (exact, since $H^1(E_1)=0$) diagram,
	in which $D_m=p_1+...+p_m$ denotes a general effective divisor of degree $m$:
	\eqspl{}{
	\begin{matrix}
		0\to&H^0(E_1)&\to&H^0(E)&\to&H^0(E_2)&\to 0\\
		&\rho_1\downarrow&&\rho\downarrow&&\rho_2\downarrow&\\
		0\to& E_1|_{D_m}&\to&E|_{D_m}&\to&E_2|_{D_m}&\to 0
		\end{matrix}	
	}	
		Case 1:
		The assertion about $s$ is obvious and implies
		 \[t:=[t(E)]=[t(E_1)]=[t(E_2)].\] 
		 Taking $m=t$, we have $\rho_1, \rho_2$ surjective hence so is $\rho$.
		Taking $m=\ceil{t(E)}$, $\rho_1, \rho_2$ are injective hence so it $\rho$.\par
		Case 2: Note this case can occur only if $s_2$, hence $t_2=t(E_2)$ is an integer.
		Taking $m=t_2$, $\rho_2$ is an isomorphism and $\rho_1$ is injective, hence
		$\rho$ is injective. Taking $m=t_2-1$, $\rho_1$ and $\rho_2$ are surjective hence so is $\rho$.
		\par
		Case 3 is similar to Case 2.
		\end{proof}
\subsection{Balanced and ultra-balanced curves, Kontsevich intersections} 
A lci curve $C\to X$ is said to be separably regular or (semi-, perfectly)
balanced if its normal bundle $N_{C/X}$ has the corresponding property. Regularity means that $C$ belongs to
a smooth family of the expected dimension. 
Semi-balance implies (and in char. 0 is equivalent to) the semi-interpolating property,
i.e. that $C$ can be 
deformed  to go through the expected number of general points of $X$, and balance 
implies  moreover that  the 
subvariety of $X$
filled up by the deformations through a fixed maximal collection of general points has the 
expected dimension. 
When $X$ contains a (semi-) balanced curve we will say that $X$ has the
(semi-) interpolation property (for curves of genus $g(C)$ and degree $\deg(C)$
if understood). \par
If $C$ is reducible and $C_1\subset C$ is a component, we will say $E$
is (semi-) balanced around $C_1$ if $H^1(E)=0$, $E$ is generated by its
sections at a general point of $C_1$, and the required surjectivity
or injectivity statements as appropriate hold for general points of $C_1$.\par
If $C$ has degree $e$ and genus $g$ in $X=\P^n$ then
\[t(C)=e+1-g+[2\frac{e-1+g}{n-1}].\]
In particular if $C$ is nondegenerate (so that  $e\geq n$) and nonspecial
(so that $e+1-g=\chi(\O_C(H))\geq n+1$), we have $t(C)\geq n+3$. 
\par
	See \cite{caudatenormal}, especially \S 1 and \S 5 for various information on normal
	bundles and fangs.\par
	A curve $C\to X$ is said to be ultra-balanced if its normal bundle is. This condition
	has an interesting interpretation in terms of intersection numbers on Kontsevich
	spaces of stable maps. Thus let $M_{g, t}(X)$ be the Kontsevich space of stable
	$t$-pointed maps $C\to X$ where $(C, x_1,...,x_t)$ is a $t$-pointed 
	stable curve of genus $g$. Let
	\[\sigma_i:M_{g, t}(X)\to X, i=1,...,t\]
	be the natural maps. Let $h$ be a birationally ample divisor on $X$  and set
	\[\eta_i=\sigma_i^*(h).\]
	Define
	\[I_M(0, u_1,...,u_t)=\int\limits_M\eta_1^{u_1}...\eta_t^{u_t}.\]
	This definition will shortly be extended to the case of a nonzero first argument.
	\begin{prop}\label{ultra-k}
		Let $M$ be a component of $M_{g,t}(X)$ whose general point has the form
		$(C, x_1,...,x_t)$ where $C$ is ultra-balanced (resp. balanced). 
		Then for all $u_1,...,u_t$
		such that \[u_1+...+u_t=\chi(N_{C/X})=(C.-K_X)+(n-3)(1-g),\] 
		(resp. and such that $u_2=...=u_t=n$) we have
		\[I_M(0,u_1,...,u_t)>0.\]
		\end{prop}
	\begin{proof}
		Considering $X\subset\P^N$, there is a natural map
		\[F:M\to (\P^N)^t.\]
		Our ultra-balanced hypothesis implies that for
		$Z=P^{N-u_1}\times...\times \P^{N-u_t}$,  $F\inv(Z)$ contains an isolated reduced point.
		Therefore the intersection number $F_*(M).Z>0$, which implies our result in the ultra-balanced
		case. The balanced case is similar. 
		\end{proof}
	\subsection{Ambient-balanced curves}
	A curve $C\to X$ of genus $g$ is said to be \emph{ambient-balanced} if the
	restricted tangent bundle $T_X|_C$ is semi-balanced, i.e.
	 for all \[t\leq t(T_X|_C)=(-K_X.C/n)+1-g, n=\dim(X),\]
	and general points $x_1,...,x_t\in C$, we have
	\eqspl{vanishing-ambient}{H^1(T_X|_C(-x_1-...-x_t))=0.}
	 Note that the vanishing \eqref{vanishing-ambient} implies
	$H^1(N_{C/X}(-x_1-...-x_t)=0$ so that a general deformation of $C$ contains
	$t$ general points of $X$. However ambient balance does not imply balance
	because \eqref{vanishing-ambient} is only assumed for $t\leq t(T_X|_C)$
	but usually $t(N_{C/X})>t(T_X|_C)$.\par
Now	\eqref{vanishing-ambient} also implies
	 surjectivity
	  the natural map induced by the normal sequence
	\[H^0(N_{C/X}(-x_1-...-x_t))\to H^1(T_C(-x_1-...-x_t)).\] Consequently
	we have
	\begin{cor}
		If $C\to X$ is  ambient-balanced then 
		 $C$ is separably moduli-interpolating, i.e. for 
			$t\leq (-C.K_X/n)+1-g$ and general points $x_1,...,x_t\in X$,
	the family of deformations of $C$ in $X$ passing through $x_1,...,x_t$
		has separably general moduli as a family of $t$-pointed curves.
		\end{cor}
	Thus, for an ambient-balanced curve $C$ we are able to impose on
	deformations of $C$ simultaneously a fixed set of $t$ general points of $X$ and
	fixed set of $t$-pointed moduli where $t=[-C.K_X/n]+1-g$. Note that such moduli are
	nontrivial even if $g=0$ provided $t\geq 4$. \par
	For genus 0 and $X=\P^n$, it follows easily, e.g. from \cite{caudatenormal}, Lemma 26 
	that a general deformation of any given curve $C$ is ambient-balanced.
	For higher genus, see Corollary \ref{ambient-cor} below.\par
	For example, the rational normal curve in $\P^n$ is both 
	perfectly balanced and perfectly ambient-balanced.
	\begin{example}\label{ambient-example}
		To put matters in perspective consider the case of a Grassmannian $X=G(k,n)$
		with its tautological subbundle $S$ and quotient bundle $Q$
		and tangent bundle $T_X=S^*\otimes Q$.
		For a rational curve $C\subset X$ of degree $e$, it is easy to see
		that on a general deformation of $C$, both $S$ and $Q$ will be balanced but,
		unless $k|e$ or $(n-k)|e$, both will be \emph{imperfect}, hence $T_X|_C$
		will be unbalanced. Consequently, $X$ contains an ambient-balanced rational
		curve of degree $e$ iff
		either $k|e$ or $(n-k)|e$. 
		In particular the set of degrees of ambient-balanced curves in $X$
		constitutes 2 arithmetic progressions.\par
As for balance, the normal sequence
\[\exseq{\O(2)}{S^*\otimes Q}{N_{C/X}}\]
plus Lemma \ref{match} show that if the slope $s=s(N_{C/X})$ 
satisfies  $[s]=2$ and $S^*\otimes Q$ is unbalanced,
then so is $N_{C/X}$. Explicitly, the slope condition is
\[[\frac{en-2}{k(n-k)-1}]=2.\]	
So whenever this holds and $e$ is not divisible by either $k$ or $n-k$, 
any rational curve of degree $e$ in $X$
is unbalanced. For example, when $n=2k$ the condition on $e$ is
\[k<e<3k/2-1/2k.\]
A general rational curve with degree in this range will be nondegenerate
(i.e. correspond to a nondegenerate scroll in $\P^{n-1}$),
unbalanced and  ambient-unbalanced. 

\end{example}
		Thus, for general Fano manifolds one may expect topological obstructions
		on a curve to be ambient-balanced or balanced, though there remains the possibility
		that all curves of sufficiently high degree are balanced.
		For Fano hypersurfaces of degree $d<n$ in $\P^n$ we will show below that the set of 
		degrees of ambient-balanced or balanced  curves contains some arithmetic progressions,
		resembling the situation for Grassmannians, while for $d=n$ this set 
		contains all sufficiently large integers.\par
		A curve $C\to X$ is said to be ultra ambient-balanced if $T_X|_C$ is ultra-balanced.
		Similarly as in Proposition \ref{ultra-k}, ultra ambient balance has an application to
		intersection numbers. Let
		\[\phi:M_{g, t}(X)\to M_{g, t}\]
		be the natural map and $\kappa=\phi^*(L)$ for some birationally ample $L$. Now define
		\[I_M(u_0, u_1,...,u_t)=\int\limits_M \kappa^{u_0}\eta_1^{u_1}...\eta_t^{u_t}.\]
		\begin{prop}
			Notations as above, assume $C$ is ultra ambient-balanced (resp.
			ambient-balanced) rather
			than ultra-balanced and $t>0$. Let
			\[u_0=\dim(M_{g, t})=3g-3+t
			 \]
			Then for all $u_1,...,u_t$ such that 
			\[\sum u_i=\chi(N)-u_0=(C.-K_X)-n(g-1)-t,\] 
			(resp. and $u_1=...=u_t=n$), we have
			\[I_M(u_0,...,u_t)>0.\]
			\end{prop}
		The proof is similar to that of Proposition \ref{ultra-k}. Note that the case of a general exponent
		vector $(u_0,...,u_t)$ of weight $\chi(N)$ remains open.
		
	\section{Relative and log tangent bundles}\label{relative-sec}
\subsection{Degeneration of tangent bundles}	We construct a relative version of the 
tangent bundle for a family of varieties
	degenerating to normal crossings of multiplicity 2.
	We begin with some local considerations. Consider the surface $X$ with equation
	$x_1x_2=t$ in $\A^3$ with its $t$-projection $\pi:X\to\A^1$. There is
	an associated derivative map
	\[d\pi:T_X\to\pi^*T_{\A^1}\]
	which is clearly surjective except at the node, i.e. the origin, and
	has image $\m\pi^*T_{\A^1}$, where $\m$ is the ideal of the origin.
	Its kernel is invertible and locally generated by the vector field
	\[v=(x_1\del_{x_1}+x_2\del_{x_2})/2+t\del_t.\]
	Now working globally, let
	\[\pi:\mathcal X\to B\]
	be a flat morphism of a smooth variety to a smooth curve whose general fibre is smooth and whose special fibres
	have at most normal crossing double points along a smooth subvariety
	$\Delta$ of codimension 2 (codimension 1 in $\pi\inv(\pi(\Delta))$). 
	Again there is a derivative map
	\[d\pi: T_{\mathcal X}\to\pi^*T_B.\]
	Because $\pi$ can be locally modelled by the above curve fibration, it follows that
	the the image of $d\pi$ is $\I_\Delta\pi^*T_B$ and its kernel, denoted $T_{\mathcal X/B}$
	and called the \emph{relative tangent bundle} of the fibration $\pi$, is locally free. Thus we have
	an exact sequence
	\eqspl{relative}{
		\exseq{T_{\mathcal X/B}}{T_{\mathcal X}}{\I_\Delta \pi^*T_B}.
}	
	In 
	fact $T_{\mathcal X/B}$ is locally near $\Delta$ generated by $v$ as above together with the
	complementary vector fields $\del_{x_3},...$ tangent to $\Delta$. 
	Note that for a smooth fibre $X_t$, we have
	\[T_{\mathcal X/B}|_{X_t}=T_{X_t}.\]
	On the other hand for a singular fibre $X_0$ with normalization $\tilde X_0$ and
	double locus $\Delta\subset \tilde X_0$, the pullback $T_{\mathcal X/B}|_{\tilde X_0}$
	is generated by $x_1\del_{x_1}$ or $x_2\del_{x_2}$ plus the complementary fields. Therefore we have
	\[T_{\mathcal X/B}|_{\tilde X_0}=T_{\tilde X_0}(-\llog{\Delta}).\]
	In particular if $X_0=X_1\cup X_2$ is a union of smooth components then
	\[T_{\mathcal X/B}|_{X_i}=T_{X_i}(-\llog{\Delta}), i=1,2.\]
	Note the exact sequences
	\[\exseq{T_{X_i}(-\Delta)}{T_{X_i}\mlog{\Delta}}{T_\Delta}, i=1,2\]
	which induce
	\eqspl{log-tang}{\exseq{\O_\Delta}{T_{X_i}\mlog{\Delta}|_\Delta}{T_\Delta}}
	where the $\O_\Delta$ subsheaf is locally generated by $x_1\del_{x_1}$ or $x_2\del_{x_2}$.
	The latter sequence is compatible with the identifications 
	\[T_{X_1}\mlog{\Delta}|_\Delta\simeq T_{X_2}\mlog{\Delta}|_\Delta\simeq T_{\mathcal X/B}|_\Delta.\]
	
	\subsection{ Restriction on curves}
	Note that given a smooth pair $(X_i, \Delta)$ and a
	 curve $C_i\subset X_i$ meeting $\Delta$ transversely in $\delta=\Delta\cap C_i$, 
	the restriction $T_{X_i}\mlog{\Delta}|_{C_i}$
	is just the elementary corank-1 down modification of $T_{X_i}|_{C_i}$ at $\delta$ corresponding to the tangent
	hyperplanes $T_p\Delta\subset T_pX_i, p\in\delta$.
	 This has the following immediate consequence
	\begin{cor}\label{relative}
		In the above notations let $\mathcal C/B\to\mathcal X/B$ be a family of curves with special fibre
		$C_0=C_1\cup_{\delta}C_2\subset X_1\cup_\Delta X_2$. 
		Then there is a bundle $T=T_{\mathcal X/B}$ on $\mathcal X$ such that for a general fibre
		$C_t\subset X_t$ we have
		\[T|_{C_t}=T_{X_t}|_{C_t}\]
		while on the special fibre, $T|_{C_i}$ for $i=1,2$
		 is the elementary corank-1 down modification of $T_{X_i}|_{C_i}$
		at the points $p\in\delta$ corresponding to the hyperplanes $T_p\Delta\subset T_pX_i$.
		\end{cor}
	\begin{example}\label{log-on-line}
		With notations as above, suppose $C_2$ is a $\P^1$ with trivial normal bundle 
		$N_{C_2/X_2}=(n-1)\O$ and $\delta=\{p\}$.
		Then $T_{X_2}|_{C_2}=T_C\oplus (n-1)\O=\O(2)\oplus (n-1)\O$, so that
		 \[T|_{C_2}=T_{X_2}\mlog{\Delta}|_{C_2}=\O(1)\oplus (n-1)\O\]
		where  the
		$(n-1)\O$ quotient coincides at $p$ with the $T_\Delta$ quotient.
		There is an analogous and compatible quotient on the $X_1$ side.
		Then for a point $q\neq p\in C_2$, we can identify $H^0(T|_{C_1\cup C_2}(-q))$
		with the kernel of the natural map 
		\[H^0(T_{X_1}\mlog{\Delta}|_{C_1})\to
		T_{p,\Delta}.\] Therefore 
		\[H^0(T|_{C_1\cup C_2}(-q))=H^0(T_{X_1}|_{C_1}(-p)).\]
		More is true. 
		In fact as in \cite{caudatenormal}, \S 1, there is a modification $T\to T'$
		with cokernel on $C_2$ such that
		\[T'|_{C_2}=n\O\]
		while $T'|_{C_1}$ is the elementary up modification of $T_{X_1}\mlog{\Delta}|_{C_1}$
		at $p$
		corresponding to the $\O_\Delta$ subsheaf as in \eqref{log-tang}, which 
		clearly coincides 	with $T_{X_1}|_{C_1}$ itself, i.e.
		\[T'|_{C_1}=T_{X_1}|_{C_1}.\]
		In particular, given a point modification of $T'|_{C_2}$ leading to an exact sequence
		\[\exseq{K}{T'|_{C_1\cup C_2}}{k\O_q}, q\neq p\in C_2\]
			then there is a corresponding exact sequence
			\[\exseq{K_1}{T_{X_1}|_{C_1}}{k\O_p}\]
			such that
			\[H^0(K)=H^0(K_1).\]
			This argument evidently extends to the case 
			where $C_2$ is a disjoint union of lines with trivial normal bundle.
		The upshot is that such components may effectively be ignored and the log tangent
		bundle $T_{X_1}\mlog{\Delta}|_{C_1}$ replaced by by $T_{X_1}|_{C_1}$ near $C_1\cap C_2$.
		This situation occurs in the proof of Theorem \ref{bmb-antican-thm}
		and Theorem \ref{fano}.
		\end{example}
	\subsection{Log tangents for projective bundle pairs} Let $\pi:X=\P(G)\to B$ 
	be a projective bundle and let $Y=\P(G/A)\subset X$ be
	a codimension-1 projective subbundle, corresponding to a line subbundle $A\subset G$.
	Let $S_G$ be the kernel of the canonical surjection $\pi^*G\to\O_X(1)$.
	Then we have the relative tangent bundle \[T_{X/B}=S^*_G\otimes\O_X(1).\]
	Note that $Y$ is the zero-divisor of the natural map $A\to\O_X(1)$, hence
	\[N_{Y/X}=A^*\otimes\O_Y(1)\]
	where $\O_Y(1)$ is the restriction of $\O_X(1)$.
	Then we have an exact sequence
	\[\exseq{T_{X/B}\mlog{Y}}{S^*_G\otimes\O_X(1)}{A^*\otimes\O_Y(1)}.\]
	Now given a curve $C\to B$, a lifting $C\to X$ corresponds to an invertible quotient $G_C\to M$.
Assume that $A_C\to M$ is injective (i.e. $C\cap Y$ is finite). Then we get an exact sequence
	\eqspl{log-proj}{\exseq{T_{X/B}\mlog{Y}|_C}{S^*_G\otimes M}{A^*\otimes M|_{C\cap Y}}.}
	\subsection{log tangents for blowups}
	Let $\pi:\hat X\to X$ be the blowup of a smooth subvariety $Y$ with normal bundle $N_Y$.
	Let $E=\P(\check N_Y)\subset\hat X$ be the exceptional divisor. Then we
	have an exact diagram
	\eqspl{}{
	\begin{matrix}
		0\to&T_{\hat X}\mlog {E}&\to&\pi^*T_X&\to& \pi^*N_Y&\to 0\\
		&\downarrow&&\parallel&&\downarrow&\\
		0\to&T_{\hat X}&\to&\pi^*T_X&\to&\O_E(1)&\to 0
		\end{matrix}	
	}
	
For example, let $Y$ be a line in $X=\P^2$ so $E=Y, \hat X=X$. If $L\subset X$ is a general line then clearly
\[T_X\mlog{E}|_L=\O(2,0)\]
with upper subbundle 
$\O(2)$ corresponding to $T_L$. If $L_1, L_2$ are distinct lines then the $\O(2)$
subspaces differ at the intersection point $L_1\cap L_2$, hence
\[T_X\mlog{E}|{L_1\cup L_2}=\O(2,2),\]
i.e. a direct sum of line bundles of total degree 2; 
therefore likewise for a general conic $C_2\subset\P^2$.	\par
Now let $Y$ be a line in $X=\P^3$ and $C_2$ a conic in a hyperplane $H\subset X$
containing $Y$, with birational transform $\hat H\subset\hat X$. Then letting $C_2'\subset \hat H$
denote the birational transform of $C_2$, we have $\O_{\hat H}(\hat H)|_{C_2'}=\O_{C'}$, consequently
\[T_{\hat X}\mlog{E}|_{C_2'}=\O(2,2,0)\] 
with upper subsheaf $\O(2,2)$ coming from $T_{\hat H}\mlog{Y}$. Now if $L\subset\hat X$
is the birational transform of a general line meeting $C_2'$ is a point then
$T_{\hat X}|_L=T_{\hat X}\mlog{E}|_L=\O(2,1,1)$.
Therefore as above we get
\[T_{\hat X}\mlog{E}|_{C_2'\cup L}=\O(3,3,2),\]
therefore likewise for $C'_2\cup L$ replaced by
 $C'_3\subset\hat X$, the birational transform of a twisted cubic meeting $Y$
in 2 points.
\par Continuing in the way, we can show that that if $\hat X$ is the blowup of $\P^n$
in a line $Y$ and $C'_n$ is the birational transform of a general rational normal curve
2-secant to $Y$, then
\[T_{\hat X}\mlog{E}|_{C'_n}=2\O(n)\oplus(n-2)\O(n-1).\]
In particular this bundle is balanced.\par
Now an argument similar to but simpler than the one in the proof of Lemma \ref{secant-lem}
below shows that the  balancedness result holds for $Y$ replaced by a linear subspace
of any codimension $c\in[2, n-1]$ as well as $C_n$ replaced by higher-degree rational curves,
so we may conclude:

\begin{lem}\label{log-secant-lem}
	Let $A\subset\P^n$ be a linear subspace of codimension $c\in[2,n-1]$ and let $P\to\P^n$
	be the blowup of $A$ with exceptional divisor $E$. 
	Let $C'\subset P$ be the birational transform of a general rational
	curve $C\subset\P^n$ of given degree $e=n$ or 
	$e\geq 2n-1$ meeting $A$ in $m\leq 2$ points. Then $T_P\mlog{E}|_{C'}$
	is balanced. 
\end{lem}

\section{Curves in projective space}\label{pn-sec}
\subsection{Balanced}
In \cite{alyang}, Atanasov, Larson and Yang construct many semi-balanced curves 
of any genus in projective space. Here we will reprove a subset of result, 
using a method that will be used below for other purposes. The following result is
the method of construction.
\begin{thm}\label{mainp}
	Let $C_1, C_2\subset\P^n, n\geq 3$, be smooth balanced nondegenerate curves of respective degrees
	$e_1, e_2$, genera $g_1, g_2$, Euler slopes $t_1, t_2>0$ and remainders $r_1, r_2$. 
	Assume  \[ r_1+r_2<n-1.\]
	
	Then\par (i)  there exists a smooth balanced 
	curve $C\subset\P^n$ of degree $e_1+e_2-1$,  genus $g_1+g_2$ and  remainder $r=r_1+r_2$;
	\par (ii) there exists a smooth balanced 
	curve $C'\subset\P^n$ of degree $e_1+e_2-2$,  genus $g_1+g_2+1$ and 
	 remainder $r=r_1+r_2$.
	\end{thm}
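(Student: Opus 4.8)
The plan is to realize $C$ (and $C'$) by \emph{gluing} $C_1$ and $C_2$ along a common point and then smoothing. More precisely, for part (i), I would choose a general point $p_1\in C_1$ and a general point $p_2\in C_2$, embed both curves in $\P^n$ so that $p_1=p_2=:p$ and the two tangent directions span a plane (i.e. the curves meet quasi-transversally, or with a prescribed node), producing a nodal curve $C_0=C_1\cup_p C_2$ of degree $e_1+e_2$, arithmetic genus $g_1+g_2$. A general such nodal curve is smoothable in $\P^n$ (this is standard once the two branches are in general position), and the smoothing $C$ has degree $e_1+e_2$ but — wait, the target degree is $e_1+e_2-1$; so instead I would first \emph{project} one of the curves from a general point on it, or rather work with the curve $C_1\cup_p C_2$ where the two curves are glued at a point \emph{and} then apply a general corank-$1$ modification / take a hyperplane section adjustment. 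The cleaner route, matching the degree bookkeeping $e_1+e_2-1$ and genus $g_1+g_2$, is: glue $C_1$ to $C_2$ not at a free point but so that they share a \emph{secant line} type configuration, or equivalently take a general deformation of $C_1\cup L\cup C_2$ with $L$ a line meeting each $C_i$ once, then smooth $C_1\cup L$ and $L\cup C_2$ — the resulting smooth curve has degree $e_1+e_2+1$... the arithmetic that actually gives $e_1+e_2-1$, $g_1+g_2$ is the \emph{node} construction combined with \emph{not} adding the line but rather identifying and deleting: take $C_0 = C_1\cup_{\{p,q\}} C_2$ meeting at \emph{two} points, giving arithmetic genus $g_1+g_2+1$ and degree $e_1+e_2$; a general smoothing has those invariants — that matches (ii) after adjusting degree by one. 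I will reconcile this below.

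Concretely, here is the construction I would commit to. For (ii): place $C_1, C_2$ in $\P^n$ meeting at two general points $p,q$ in general position (general tangent directions); the union $C_0$ has degree $e_1+e_2$... but we want $e_1+e_2-2$. So the right move is to let them meet at two points but with one of the curves replaced by its image under projection reducing degree — the systematic device in \cite{caudatenormal} is the \emph{fang degeneration}: degenerate $\P^n$ to a union of two $\P^n$'s meeting along a hyperplane, carrying $C_i$ into the $i$-th component, so that $C_1$ and $C_2$ each meet the common hyperplane in one point, and the limit curve is $C_1\cup_p C_2$ meeting at a single point on the separating hyperplane. Smoothing back to the generic fiber produces a curve whose degree is $e_1+e_2$ minus a correction coming from how the components sit relative to the hyperplane; choosing the intersection multiplicities to be $1$ on each side yields degree $e_1+e_2-1$, genus $g_1+g_2$ for (i), and meeting with multiplicity $2$ (a tacnode, or two points) on the separating hyperplane yields degree $e_1+e_2-2$, genus $g_1+g_2+1$ for (ii). This is exactly the genus-$0$ mechanism of \cite{caudatenormal}, now run with $C_i$ of arbitrary genus.

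The balancedness of $C$ (resp. $C'$) is then a normal-bundle computation. On the fang-degenerate limit, the normal bundle $N_{C_0/X_0}$ fits into exact sequences relating it to $N_{C_1/\P^n}$, $N_{C_2/\P^n}$, and the normal bundle of the separating hyperplane, twisted appropriately at the gluing point(s). Since $C_1, C_2$ are balanced with Euler slopes $t_1,t_2>0$, Lemma \ref{match} applies provided the pieces satisfy the matching condition $[s_1]=[s_2]$ on slopes — and the hypothesis $r_1+r_2<n-1$ is precisely what guarantees, after the degree/twist shifts, that the resulting sub- and quotient bundles have matching integer parts of their slopes and that the new remainder is exactly $r_1+r_2$. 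Lemma \ref{down} handles the corank-$1$ modifications introduced at the nodes. Then one checks $H^1=0$ and generic global generation for $N_{C_0/X_0}$, concludes it is balanced, and uses semicontinuity of $h^1$ and of the rank of the evaluation maps $\rho_t$ to transport balancedness to the smooth curve $C$ (resp. $C'$) in the generic fiber; smoothness of $C$ follows from the standard smoothing criterion since $N_{C_0/X_0}$ is globally generated.

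\medskip
\noindent\emph{Main obstacle.} The delicate point is not the existence of the smoothing but controlling the normal bundle of the reducible limit \emph{precisely enough} to invoke Lemma \ref{match}: one must identify the correct exact sequence for $N_{C_0/X_0}$ (the limit is reducible and the ``$X$'' itself is degenerating), pin down the twists at the node(s) on the separating hyperplane, and verify that $r_1+r_2<n-1$ forces both the matching condition $[s_1]=[s_2]$ and the remainder bookkeeping $r=r_1+r_2$ — this is where the genus-$0$ argument of \cite{caudatenormal} must be genuinely re-examined rather than quoted, since the interaction of $g_1,g_2$ with the Euler-slope arithmetic ($t(E)=s+1-g$, with $s$ possibly non-integral) is what makes the higher-genus case new.
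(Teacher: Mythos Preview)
Your overall framework is correct and matches the paper: you degenerate $\P^n$ via a fang (the paper blows up a linear $\P^\ell\subset\P^n\times 0$ in $\P^n\times\A^1$, getting a special fibre $P_1\cup_E P_2$ with $P_i$ blowups of $\P^n$), place $C_i$ in $P_i$ meeting at one point $p\in E$ for (i) or two points $p,q$ for (ii), and smooth. The degree/genus numerology works out exactly as the paper records: $s=s_1+s_2-1$, $[s]=[s_1]+[s_2]-1$, $t=t_1+t_2-2$ for (i), with the hypothesis $r_1+r_2<n-1$ forcing the integer parts to add without carrying. Your early waffling about degree is harmless once you commit to the fang.

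The real gap is in your plan for balancedness. You propose to invoke Lemma~\ref{match}, but that lemma applies to a short exact sequence of bundles on a \emph{single} curve with balanced sub and quotient. Here the natural sequences involve the restrictions $N_{C_i/P_i}$, which are specific (not general) elementary modifications of $N_{C_i/\P^n}$ at $p$ determined by the geometry of $E$, and there is no reason for these to be balanced --- Lemma~\ref{down} does not help, since the modification is not at general points with general hyperplanes. So the matching-lemma route does not obviously go through, and indeed the paper does \emph{not} use Lemma~\ref{match} here.

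What the paper does instead is the step you are missing. It observes that $H^0(N_{C_0/P_0})$ sits inside $H^0(N_{C_1/P_1})\oplus H^0(N_{C_2/P_2})$ as the inverse image of the ``diagonal'' $\Delta\subset N_{C_1/P_1}|_p\times N_{C_2/P_2}|_p$ under the common identification with $T_pE$. It then \emph{deforms} this diagonal $\Delta$ to a degenerate $\Delta_0$ containing the product $V_1\times V_2$ where $V_i=N_{C_i/\P^n}(-p)|_p$. This reduces the $H^1$-vanishing to that of $N_{C_1/\P^n}(-p)\oplus N_{C_2/\P^n}(-p)$, which holds since $t_1,t_2>0$. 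For semi-balancedness one takes $t_i-1$ general sections landing on each $C_i$; since $p$ together with these points is a general $(t_i)$-tuple on $C_i$, balancedness of each $C_i$ gives the required surjectivity on $V_1\times V_2$, hence on $\Delta$ by semicontinuity. Balancedness (the injectivity at one more point) is then checked by adding a further general section on $C_1$. This direct section-counting on the degenerate diagonal is the key device, and it is exactly the point you flagged as the ``main obstacle'' but did not resolve.
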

\begin{proof} We begin with some numerology.
	Set $g=g_1+g_2, e=e_1+e_2-1$ and
	\[s=\frac{e(n+1)+2g-2}{n-1}, s_i=\frac{e_i(n+1)+2g_i-2}{n-1}, i=1,2.\]
	\[t=[s]+1-g, t_i=[s_i]+1-g_i, i=1,2.\]
	Thus $s=[s]+r/(n-1)$ and likewise for $t, s_i, t_i$.
	Note that $s=s_1+s_2-1$ hence $[s]=[s_1]+[s_2]-1$
	and
	\[ t=t_1+t_2-2\]
	\par
	We use the same basic fang construction as in \cite{caudatenormal}. Let
	\[b_1:\mathcal P(\l)=B_{\P^{\l}\times 0}(\P^n_1\times\A^1)\to\P^n_1\times\A^1\]
	be the blow up, which fibres $\pi:\mathcal P(\l)\to\A^1$
	with special fibre $P_0=\pi\inv(0)=P_1\cup_EP_2$ where
	\[P_1=B_{\P^{\l}_1}\P^n_1, P_2=B_{\P^{n-1-\l}_2}\P^n_2,
	E=\P^{\l}_1\times\P^{n-\l-1}_2\]
	and general fibre $\P^n$. $P_0$ is called a fang of type $\l$.\par
	Now for (i), we let $C_i\subset P_i, i=1,2$ 
	be the proper transform of a smooth curve of degree $e_i$
	and genus $g_i$, such that $C_1. E=C_2. E=p$ (transverse intersection)
	and $C_0=C_1\cup_pC_2$. 
	Then the normal bundle $N_{C_i/P_i}, i=1,2$ is an elementary pointwise modification
	of $N_{C_i/\P^n_i}$ of colength $n-1-\l$ (resp $\l$), and 
	under the identification $N_{C_i/P_i}|_p=T_pE$, the kernel
	of the natural map $N_{C_i/P_i}\to N_{C_i/\P^n_i}$ may be identified with
	$T_p\P^{n-1-\l}_2$ (resp $T_p\P^{\l}$). There is an exact sequence
	\eqspl{}{
		\exseq{N_{C_0/P_0}}{N_{C_0/\mathcal P(\l)}}{T^1}	
	}  where $N_{C_0/P_0}, N_{C_0/\mathcal P(\l)}$ are the lci normal bundles,
	$N_{C_0/P_0}=N_{C_1/P_1}\cup_{T_pE}N_{C_2/P_2}$  parametrizes
	compatible deformations of $(C_1, C_2)$ and 
	\[T^1=T^1_{P_0}|_{C_0}=N_{P_0/\mathcal P(\l)}|_{C_0}=T^1_{C_0}\] 
	is a 1-dimensional skyscraper sheaf at $p$.\par
	As the equations defining $C_0$ on $P_0$ restrict to defining equations for each 
	$C_i$ on $P_i$
	\[N_{C_0/P_0}|_{C_i}= N_{C_i/P_i}, i=1,2.\]
	We have exact sequences
	\eqspl{tau}{\exseq{N_{C_i/P_i}}{N_{C_i/\P^n}}{\tau_i}, i=1,2,}
	\eqspl{sigma}{\exseq{N_{C_i/\P^n}(-p)}{N_{C_i/P_i}}{\sigma_i}}
		where $\tau_i$ is a skyscaper sheaf at $p$ of length $\l(\tau_i)=n-1-k, i=1$
		or $k, i=2$, and $\l(\sigma_i)=n-1-\l(\tau_i)$. We have  canonical identifications
		\eqspl{identify}{N_{C_1/P_1}|_p\simeq N_{C_2/P_2}|_p\simeq T_pE.
		} 

	Note that we have subspaces
	\[V_i=N_{C_i/\P^n}(-p)|_p\subset N_{C_i/P_i}|_p, i=1,2\]
	of codimensions $k$ resp $n-1-k$. 
	The image of the restriction map
		\[N_{C_0/P_0}\to N_{C_1/P_1}\oplus N_{C_2/P_2}\]
		and the induced map
	\[H^0(N_{C_0/P_0})\to H^0(N_{C_1/P_1})\oplus H^0(N_{C_2/P_2})\]
	is the inverse image of the 'diagonal' $\Delta$ under the above identification
	\eqref{identify}.
	There is  a standard deformation $\Delta_t$ of $\Delta$ to a $\Delta_0$
	which is union of subspaces, one of them being
 $V_1\times V_2$. 
 This implies firstly that $N_{C_0/P_0}$ admits a specialization to a sheaf that contains
 $N_{C_1/\P^n}(-p)\oplus N_{C_2/\P^n}(-p)$ as cotorsion subsheaf and since that latter
 sheaf has $H^1=0$ (because $t_1, t_2>0$), so does $N_{C_0/P_0}$, i.e.
 \[H^1(N_{C_0/X_0})=0. \]
 It also follows easily that $N_{C_0/X_0}$ is generically generated.\par
 Now the above $H^1$ vanishing implies that,
 possibly after an \'etale base change $A\to\A^1$, $C_0\subset P_0$
 extends to a surface $S$ fibred over $A$. Let $C$ be its general fibre.
 Let $x_{i1},...,x_{it_i-1}, i=1,2$
 be general sections of $S$ specializing to general points of $C_i$.
 Now as $x_{11},...,x_{1t_i-1}, p$ for $i=1,2$ are
 general points on $C_i$ and hence by our hypothesis on $C_1$ and $C_2$,
the restriction map  
\[\rho_0: V_1\times V_2\to
N_{C_0/P_0}|_{\{x_{11},...,x_{1t_1-1}, x_{21},...,x_{2t_2-1}\}}\]
is surjective. 
Therefore the same is true		of $\Delta_t$ for general $t$ hence for $\Delta$ itself
if choose the above identifications generally. Therefore the same is true $N_{C/\P^n}$, which 
shows that $C$ is semi-balanced.\par
For balancedness we argue similarly but, in case $s$ is not an integer,
 add one more section $y$ specializing to a general point on $C_1$.
 Because $C_1$ is balanced, the kernel of the map $\rho_0$ above injects
 into $N_{C_1/\P^n}(-p)|_y$. Therefore the same is true for the kernel
 of the analogous restriction map on $H^0(N_{C_0/P_0})$ therefore ditto for
 $H^0(N_{C/\P^n})$, which proves the injectivity property yielding
 balancedness. This completes the proof of (i).\par
 For (ii), we use the same construction except now $C_i\subset P_i$ meet $E$
 and each other in 2 general
 points $p,q$, so that
 \[C_0=C_1\cup_{\{p,q\}}C_2\]
 has genus $g=g_1+g_2+1$ and 'degree' $e=e_1+e_2-2$.  Note in this case we have
 \[s=s_1+s_2-2, [s]=[s_1]+[s_2]-2, t=t_1+t_2-4.\]
 We have subspaces
 \[V_{ip}=N_{C_i/P_i}(-p-q)\subset N_{C_i/P_i}, i=1,2\]
 and likewise for $q$, and the image of the restriction map
 \[H^0(N_{C_0/P_0})\to H^0(N_{C_1/P_1})\oplus H^0(N_{C_2/P_2})\]
 is the inverse image of the 'bidiagonal' $\Delta_p\times\Delta_q$ under restriction to 
 $\bigoplus\limits_{i=1,2} N_{C_i/P_i}|_{\{p,q\}} $. As above, $\Delta_p\times\Delta_q$
 deforms to $\Delta_{0,p}\times\Delta_{0,q}$ which contains $W:=V_{1,p}\times V_{2,p}
 \times V_{1,q}\times V_{2,q}$.\par
 We consider general sections $x_{ij}, i=1,2, j=1,...,t_i-2$. As above,
 $W$ surjects to $N_{C_0/P_0}|_{x_{11},...,x_{t_2-2}}$ which implies the required surjectivity
 		for $H^0(N_{C_0/P_0})$ and hence for $H^0(N_{C/\P^n})$ for the smoothing $C$, 
 		which proves semi-balancedness. \par
 Now the injectivity statement for balancedness is proven as in part (i).
 This completes the proof.
\end{proof}
\begin{example}\label{balanced-ex}
	
	(i) Taking $e_1=e+2-n, e_2=n, g_1=g_2=0$ in Theorem \ref{main}, (ii) 
	yields ultra-balanced elliptic curves in $\P^n$ of any degree $e\geq 2n-2$.
	In this case $r_2=0, r_1=r$. In particular, the resulting curve is perfect when
	$e=2n-2$.\par
	(ii) Using two ultra-balanced elliptic curves as above  and combining them as in
	Theorem \ref{main}, (i)  yields  a balanced curves
	of genus $2$ and  any degree $e\geq 2(2n-2)-2=4n-6$ in $\P^n$.
	Continuing inductively, we get 
	ultra-balanced curves of genus $g$ and any degree $e\geq g(2n-4)+2$
	in $\P^n$.\par
	(iii) Taking $C_1$ (ultra)- balanced and $C_2$ a rational normal curve (remainder 0) in Part (i)
	yields (ultra) balanced curves of degree $e_1+n-1$ and genus $g_1$. Taking such $C_1, C_2$ in
	Part (ii) yields balanced curves of degree $e_1+n-2$ and genus $g_1+1$.\par
	Continuing inductively, this yields the following special case of the Atanasov-Larson-Yang result
	\cite{alyang}:
	
\end{example}
\begin{cor}\label{Pn}
	For all $g\geq 1, n\geq 3$ and $e\geq n+g(n-2)$, a general curve of genus
	$g$ and degree $e$ in $\P^n$ is balanced.
\end{cor}
\subsection{Ultra-balanced}
Next we refine the result to yield ultra-balanced curves, at the cost of going to higher degree.
\begin{thm}\label{ultra-balanced-thm}
	For all $g\geq 0$ and $e\geq 2(g+1)n$, $n\geq 3$, a general 
	curve of degree $e$ and genus $g$ in $\P^n$ is ultra-balanced .
\end{thm}
\begin{cor}
	For $e, g, n$ as in Theorem \ref{ultra-balanced-thm}, the conclusion 
	of Proposition \ref{ultra-k} holds for $X=\P^n$ and any $t>0$.
	\end{cor}
\begin{proof}[Proof of Theorem]
	We begin with a lemma.
	\begin{lem}\label{secant-lem}
		Let $A\subset\P^n$ be a linear subspace of codimension $c\in[2,n-1]$ and let $P\to\P^n$
		be the blowup of $A$. Let $C'\subset P$ be the birational transform of a general rational
		curve $C\subset\P^n$ of given degree $e=n$ or 
		$e\geq 2n-1$ meeting $A$ in $m\leq 2$ points. Then $C'$
		is balanced in $P$. 
	\end{lem}
	\begin{proof} 
		The case $m=0$, i.e. the assertion that $C$ is balanced in  $\P^n$, originally due to Sacchiero,
		is reproved as Proposition 19 in \cite{caudatenormal} and the case $m=1$ follows easily
		from that as $N_{C'/P}$ is a general modification of $N_{C/\P^n}$. We will focus on the case
		$m=2$ which is harder, as the modifications involved are not general.
		The proof will proceed analogously to the one in loc. cit.
		\par \ul{Case 1:} $e=n$, i.e.
		$C$ is a rational normal curve.\par
		Consider first the case $\dim(A)=1$, i.e. $c=n-1$, where the claim is that
		\[N_{C'/P}=2\O(n+1)\oplus (n-3)\O(n).\]
		First, for $n=3$, $A$ is a 2-secant line
		of the twisted cubic $C$ and $C\cup A$ is a (2,2) complete intersection, so $C'$ is a complete
		intersection of type $(\O(2)-E, \O(2)-E)$ in $P$, $E$ being the 
		exceptional divisor, hence clearly $N_{C'/P}=2\O(4)$ as desired.\par 
		For $n\geq 4$
		we use induction on $n$ using a degenerated curve of the form $C=L\cup_p C_{n-1}$
		where $C_{n-1}$ is a general rational normal curve in a hyperplane $H\subset\P^n$
		and $A$   is a general 2-secant line to $C_{n-1}$ while $p$ is a general point on  $C_{n-1}$
		and $L$ is a general line through $p$. 
		Let $C'_{n-1}\subset H'\subset P$ denote the proper transforms.
		By induction, we have \[N_1:=N_{C'_{n-1}/H'}=2\O(n)\oplus(n-4)\O(n-1),\] hence
		\[N_2:=N_{C'_{n-1}/P}=N_1\oplus \O(n-3)\]
		where $N_1\subset N_2$ is canonical but not the $\O(n-3)$. 
		Moreover, as in loc. cit. we have
		\[N_{C'/P}|_{C'_{n-1}}=N_1\oplus\O(n-2)\]
		and the image of $N_{C'_{n-1}/P}|_p\to N_{C'/P}|_p$ coincides with the image of $N_1$.
		On the other hand we have $N_{C'/P}|_L=\O(2)\oplus(n-2)\O(1)$ and the upper subspace
		coming from the $\O(2)$ is clearly not in the image of $N_{L'/P}\to N_{C'/P}|_L$ at $p$,
		which coincides with the image of $N_{C'_{n-1}}\to N_{C'/P}|_{C_{n-1}}$ at $p$. The upshot
		is that, as in loc. cit. the $\O_{L'}(2)$ must be glued
		at $p$ to an $\O_{C'_{n-1}}(n-2)$ and consequently we have
		\[N_{C'/P}=2\O(n+1)\oplus (n-3)\O(n),\]
		as claimed.\par
		Next consider the case $c+1\leq n\leq 2c-1$ where we must show
		\[N_{C'/P}=(2n-2c)\O(n+1)\oplus (2c-n-1)\O(n).\]
		Again the proof is by induction on $n$ fixing $c$, 
		where the initial case $n=c+1$ is where $A$ is a line which was just
		concluded. Thus assume $n>c+1$ and consider a degenerated curve
		$C=C_{n-1}\cup_pL$ as above.  Arguing as above we get
		\[N_{C'/P}|_{C'_{n-1}}=(2n-2c-2)\O(n)\oplus (2c+1-n)\O(n-1),\]
		\[ N_{C'/P}|_{L'}=\O(2)\oplus(n-2)\O(1)\]
		where the $\O_L(2)$ must glue at $p$ to a general $\O_{C'_{n-1}}(n-1)$ which implies
		$N_{C'/P}$ has the desired form. 	
		
		Finally consider the case where $A$ has codimension $c$ with $n\geq 2c-1$. Then the claim is
		\[N_{C'/P}=(n+1-2c)\O(n+1)\oplus (2c-2)\O(n).\]
		Again we work by induction on $n$ where the initial case $n=2c-1$ is already known, so
		assume $n>2c-1$. Here a similar argument shows
		\[N_{C'/P}|_{C'_{n-1}}=(n-2c)\O(n)\oplus (2c-2)\O(n-1),\]
		\[N_{C'/P}|_{L'}=\O(2)\oplus(n-2)\O(1)\]
		and again the $\O_{L'}(2)$ must glue at $p$ to a $\O_{C'_{n-1}}(n-1)$, so we can conclude as above.
		This finally completes the proof of Case 1. \par
		Note that what
		we have proven is equivalent to: if $C\subset\P^n$ is a rational normal curve
		with normal bundle $N\simeq(n-1)\O(n+2)$,
		$p,q\in C$ are general points, $A$ is a general linear space containing
		the line $\overline{pq}$, and $N'$ is the corresponding '$A$-modification', i.e.
		\[N'=\ker(N\to ((N|_p/T_pA)\oplus (N|_q/T_q)))\subset N,\] 
		then $N'$ is balanced.\par
		\ul{Case 2:} $e\geq 2n-1$.\par
		Notations as above, set
		\[\l=n-1-((n+2)(n-1)-2(c-1))\%(n-1).\]
		Using a fang degeneration as in the first part of the proof,
		take a general $\P^{\l}$ meeting the rational normal curve 
		$C$ in 1 point and let $C_1\subset P_1=B_{\P^{\l}}\P^n$
		be the birational transform of $C$; let $C_2\subset P_2=B_{\P^{n-\l-1}}\P^n$ be the birational transform
		of a general rational curve of degree $e-n+1$, 
		so that $C_1\cap E= C_2\cap E=\{y\}$ is 1 point, where $E$ is the exceptional divisor
		in $P_1$ and $P_2$. Then the appropriate $A$-modification
		of $N_{C_1/P_1}$ at $p,q$  (which is also a suitable modification
		of $N'$ above at $y$) is perfect, while $N_{C_2/P_2}$ is balanced. Then 
		\[C_1\cup_y C_2\subset P_1\cup_E P_2\]
		smooths out to a rational curve of degree $e$ in $\P^n$ whose $A$-modification is balanced.
		This completes the proof of the Lemma.
		
	\end{proof}

 Now the proof of  Theorem \ref{ultra-balanced-thm} is by induction on the genus. 
 First for $g=0$ the result follows from the fact that
 a general rational curve of degree $\geq n$ in $\P^n$ is balanced
 which is equivalent to ultra-balanced. Next we
 consider the case $g=1$
 using a fang construction as 
 in the proof of Theorem \ref{mainp}, with $g_1=g_2=0, e_1+e_2=e+2, e_1, e_2\geq 2n$ but with
 \[C_0=C_1\cup C_2, C_1\cap E= C_2\cap E=\{p,q\}\]
 ($e_1, e_2$ and $\l$ are to be determined). 
 By  Lemma \ref{secant-lem}, we may assume each $C_i$ is ultra-balanced in $P_i$. 
 \par
 Let $N=N_{C_0/P_0}$. 
 Let $(u_1,...,u_t)$ be any weight vector with each $u_i\in [1, n-1]$, such that
 \[u_1+...+u_t=\chi:=\chi(N)=e(n+1), e=e_1+e_1-2, e_i=\deg(C_i).\]
 Let $N^u=N^{(u_1,...,u_t)})$. We will show $H^0(N^u)=0$, so that $N$ is $(u_1,...,u_t)$-balanced.
 Set \[N_i=N_{C_0/P_0}|_{C_i}=N_{C_i/P_i},\]\[ 
 \chi_i=\chi(N_{C_i/\P^n})=e_1(n+1)+(n-3),\]\[ \chi'_i=\chi(N_i)=e_1(n+1)+(n-3)-2\l_i,  i=1,2,\]
 where $\l_1=\l-1, \l_2=n-\l$. Then
 \[u_1+...+u_t=\chi'_1+\chi'_2-2(n-1).\]
 \begin{lem} \label{rearrangement} By choosing $e_1, e_2$ properly and relabeling
 $u_1,...,u_t$,  we can arrange things so that
 \eqspl{chi}{u_1+...+u_s=\chi'_1-(n-1), u_{s+1}+...+u_t=\chi'_2-(n-1).}
  \end{lem}
  \begin{proof}[Proof of Lemma] It suffices to arrange that
 \[2\l_1=\chi_1-(n-1)-(u_1+...+u_s)=e_1(n+1)-2-(u_1+...+u_s)\]
 for then the other equality in \eqref{chi} is automatic.
 Let $u_1+...+u_s$ be a maximal sub-sum that is $\leq\chi_1-(n-1)=e_1(n+1)-2$. Then
 \[\chi_1-2(n-1)\leq u_1+...+u_s\leq\chi_1-(n-1),\] 
  \[\chi_2-2(n-1)\leq u_{s+1}+...+u_t\leq\chi_2-(n-1).\] 
 If either  $d_1:=\chi_1-(n-1)-(u_1+...+u_s)$ or the analogous $d_2$ is even we can just set 
 \[\l_i=(\chi_1-(n-1)-(u_1+...+u_s))/2\]
 and \eqref{chi} holds. Hence we may assume $d_1$ and $d_2$ are odd. Assume first that $n$ is odd, hence
 we may also assume $u_s$ is odd. If 
 \eqspl{u}{u_1+...+u_{s-1}\geq\chi_1-2(n-1)} we may just replace
 $s$ by $s'=s-1$ and be done. If \eqref{u} fails we may replace $e_1$ by $e_1'=e_1-1$ and $e_2$ 
 by $e'_2=e_2+1$ and then be done. \par Now Assume $n$ is even. If 
 \[u_1+...+u_s\equiv e_1\mod 2\] we can just
 let \[\l_1=(e_1(n+1)-2-(u_1+...+u_s))/2.\] Otherwise, we just let $e'_1=e_1+1$ and 
 $e'_2=e_2-1$ and work with $e'_1, e'_2$ instead. QED claim.
 \end{proof}
 Now as $N_1^u:=N_1^{(u_1,...,u_s)}, N_2^u:=N_2^{(u_{s+1},...,u_t)}$ 
 are balanced and have $\chi=\rk= n-1$, we have
 \[N_1^{u}=(n-1)\O_{C_1}, N_2^{u}=(n-1)\O_{C_2}.\]
 Now let $E_i\subset P_i, i=1,2$ be the exceptional divisor (a copy of $E$). 
 Then $P_0$ is constructed using
 an arbitrary isomorphism $\phi:E_1\to E_2$ and I clam that by choosing $\phi$ sufficiently general,
 we can ensure that
 \[H^0(N^{u})=H^0(N_1^{u}\cup_{p,q}N_2^{u})=0,\]
 i.e. no nonzero sections of $N_1^u$ and $N_2^u$ agree on $p$ and $q$. 
 Now we have natural isomorphisms
 \[T_pE_i\simeq N_i^u|_p\simeq H^0(N_i^u)\simeq N^u_i|_q\simeq T_q E_i, i=1,2.\]
 It will suffice to choose the isomorphism $\phi$, which may be identified
 as an arbitrary automorphism of $\P^\l\times\P^{n-1-\l}$, so that 
 the derivative $d_p\phi-d_q\phi$ is nonsingular where $d_p\phi:T_pE_1\to T_pE_2$
 is the derivative and likewise for $q$. By suitable identifications, we may assume $d_p\phi$
 is the identity $I$ while $d_q\phi$ is an arbitrary trace-0 matrix $M$. Then clearly for
 suitable $M$ (e.g. non-scalar diagonal), $M-I$ is nonsingular. This completes the proof for genus 1.
 \par
 Now for $g>1$ we argue by induction, using a fang degeneration as above but with
 \[C_0=C_1\cup_pC_2\subset P_0=P_1\cup P_2, g_1=1, g_2=g-1.\]
 Using notations as above, we let $u=(u_1,...,u_t)$ be any weight vector
 with $\chi(N^u)=0$.
 We may assume $C_1, C_2$ are ultra-balanced and that $p$ is general on $C_1, C_2$.
 An argument similar to the proof of Lemma \ref{rearrangement} above but simpler shows that
we may assume by choosing the fang type (i.e. $\l$) suitably
that \[\chi(N_1^{(u_1,...,u_s)})=0, \chi(N_2^{(u_{s+1},...,u_t)})=n-1.\]
By ultra-balancedness we have first $H^0(N_1^u))=0$, then because $\chi(N_2^u(-p))=0$,
also $H^0(N_2^u(-p))=0$. Hence $H^0(N^u)=0$.

	\end{proof}
\subsection{Ambient-balanced}
The analogue of Theorem \ref{mainp} for ambient-balanced curves also holds:
\begin{thm}\label{ambient}
	Let $C_1, C_2$ be as in Theorem \ref{mainp} and assume moreover\par (i) $C_1, C_2$ are ambient-balanced;\par
	(ii) the ambient remainders $r_1=e_1\%n, r_2= e_2\%n$ satisfy $r_1+r_2<n$ (e.g. $n|e_1$).\par  Then \par
	(i)  there exists a smooth ambient-balanced
	curve $C\subset\P^n$ of degree $e_1+e_2-1$,  genus $g_1+g_2$ and ambient remainder $r=r_1+r_2$;
	\par (ii) there exists a smooth ambient-balanced
	curve $C'\subset\P^n$ of degree $e_1+e_2-2$,  genus $g_1+g_2+1$ and ambient remainder $r=r_1+r_2$.
	\end{thm}
\begin{proof}
	We follow the general outline of the proof 
	of Theorem \ref{mainp} but now taking $C_1$ and $ C_2$ in the same $\P^n$.
	By assumption $t(N_{C_i/\P^n})\geq 2, i=1,2$ so we may assume $C_1\cap C_2$ is exactly 1 general
	point (Case (i)) or 2 general points (Case (ii)). 
	Then as in the above proof it follows
	that $C_1\cup C_2$ is smoothable in $\P^n$. From Lemma \ref{reducible} it follows that $T_{\P^n}|_{C_1\cup C_2}$
	is semi-balanced, hence this is true for the smoothing as well.
	\end{proof}
\begin{cor}\label{ambient-cor}
	For all $g\geq 0, n\geq 4$ and $e\geq n+g(n-2)$, there exists a balanced and ambient-balanced,
	hence moduli-interpolating curve of genus
	$g$ and degree $e$ in $\P^n$.
	\end{cor}
\begin{proof} The case $g=0$ is well known (balancedness by Sacchiero \cite{sacchiero}, ambient-balancedness e.g. by Lemma
	26 of \cite{caudatenormal}), so assume $g\geq 1$.
	By Corollary \ref{Pn} there exists such a curve $C'$ that is balanced. 
	Using Theorem \ref{ambient} with $C_1$ a rational normal curve, 
	it follows similarly using induction on $g$ that there is such a curve $C"$
	that is ambient-balanced. Because $C', C"$ are non-special, the family of curves of
	degree $e$ and genus $g$ in $\P^n$ is irreducible, hence the general curve  $C$
	in the family is balanced and ambient-balanced.
	\end{proof}
Finally, we will prove an analogue of Theorem \ref{ultra-balanced-thm} for ambient balanced curves.
\begin{thm}
	For $e\geq 3g+n+1, n\geq 3$, there exists an  
	ultra ambient-balanced curve of degree $e$ and genus $g$
	in $\P^n$,
	\end{thm}
Using Theorem \ref{ultra-balanced-thm}, we conclude
\begin{cor}\label{ultra-bal-Pn}
	For $e\geq 2(g+1)n, n\geq 3$, a general curve of degree $e$ and genus $g$ in $\P^n$
	is ultra-balanced and ultra-ambient balanced.
	\end{cor}
\begin{proof}[Proof of Theorem]
	The proof is analogous to that of Theorem \ref{ultra-balanced-thm} and proceeds by induction on
	 the genus. The case $g=0$ follows from the fact that balanced = ultra balanced in genus 0. 
\par	 We next take up the case 
	  $g=1, e\geq 3, n\geq 2$.
	  beginning with $n=2, e=3$.
 		In this case what must be shown is that for a weight-vector
 		\[u=(u_1,...,u_t), u_i\in\{1,2\}, |u|=9,\] 
 		and for a general cubic $C$, we have 
 		\[H^0(T_{\P^2}^u|_{C})=0.\] As this is an open property of $C$
 		we may consider a reducible
 		cubic  $C=C_2\cup_{p,q} L$ with $C_2$ a conic and $L$ a line.
 		Then we have
 		\[T_{\P^2}|_L=\O(2,1), T_{\P^2}|_{C_2}=\O(3,3).\]
Now the weight vector $u$ must have an odd number of components equal to 1, 
with the rest equal to 2, hence
		we may assume $u=(u', u")$ with $|u'|=5$ and then $H^0(T_{\P^2}^{u'}|_L)=0$
 		and $H^0(T_{\P^2}^{u"}|_{C_2}(-p-q)=0$. Consequently
 		$H^0(T_{\P^2}^u|_C)=0$, which proves the result for cubics in $\P^2$.\par
 		Next we will prove by induction on $ n\geq 2$ 
 		that for a general cubic $C$ in a plane in $\P^n$, 
 		$C$ is ultra ambient balanced in $\P^n$.
 		The proof is by induction on $n$ with $n=2$ already known so assume $n\geq 3$ and note that
 		\[T_{\P^n}|_C=T_{\P^{n-1}}|_C\oplus L, L:=\O(1)|_C.\]
 			If $u=(u_1,...,u_t), u_i>0$ is a weight vector of weight $|u|=3(n+1)$
 			then $t\geq 3$ so we can write $u=u'+u", u"=(1,1,1,0,...,0)$
 			and then 
 			\[H^0(T_{\P^n}|_C^u)=H^0(T_{\P^{n-1}}|_C^{u'})\oplus H^0(L^{u"}).\]
 			Now the first summand vanishes by induction and the second by inspection.
 			Thus $T_{\P^n}|_C$ is ultra-balanced.\par
 		
 		Next we conside the case of a general degree  $e\geq 3$ and $g=1$, 
 		working by induction on $e$.  Consider a curve of the form
 		$C^1_{e+1}=C^1_e\cup_pL$ where $C^1_e$ is elliptic
 		and $L$ is a 1-secant line, and pick a weight vector $u=(u_1,...,u_t)$
 		with $|u|=\chi(T_{\P^n}|_{C^1_{e+1}})=(n+1)(e+1)$. Note that
 		\[T_{\P^n}|_L=\O(2)\oplus(n-1)\O(1).\]
 		Write $u=(u',u")$ with $|u'|$ maximal subject to $|u'|\leq\chi(T_{\P^n}|_L)=2n+1$,
 		so that $|u'|\geq n+1$ and also 
 		\[(n+1)e-n\leq u"\leq (n+1)e=\chi(T_{\P^n}|_{C^1_e}).\]
 		Write $u'=(u_1,...,u_s)$ and let the quotients $U_1,...,U_s$ be supported on $L$.
 		Then the restriction maps
 			\[\rho_1: H^0(T_{\P^n}|_L^{u'})\to T_{\P^n}|_p, \rho_2: H^0(T_{\P^n}|_{C^1_e}^{u"})\to T_{\P^n}|_p\]
 			are injective by inspection (resp. induction).
 		Considering $N_L(-1)$ trivialized, the quotients $U_1,...,U_s$ are general mod $T_pL$
 		while $T_pL$ itself may be chosen generally fixing $C^1_n$. Therefore the images
 		of $\rho_1, \rho_2$
 		are in general position, i.e. complementary. Therefore $H^0(T_{\P^n}|_{C^1_{e+1}}^u)=0$. 
 		This finally proves the Theorem for $g=1$.
 		\par
 		Now for $g>1$ we argue by induction on $g$ and 
 		can just copy over the last part of the proof of Theorem
 		\ref{ultra-balanced-thm},  using a fang curve
 		\[C_1\cup_pC_2\subset P_1\cup_EP_2\]
 		with $C_1$ elliptic and $C_2$ of genus $g-1$,
 		and using the relative tangent bundle $T_{\mathcal P(\l)/\A^1}$
 		discussed in \S \ref{relative-sec} instead of the relative normal bundle . 
 		$C_1$ and $C_2$ may be assumed ultra ambient-balanced in $\P^n$
 		and consequently $T_{P_i}\mlog{E}|_{C_i}, i=1,2$ is ultra-balanced as well. 
 		Appropriately distributing weights and degrees among $C_1, C_2$ as in the latter proof, 
 		it goes through essentially verbatim.

	\end{proof}
	\section{Curves in anticanonical hypersurfaces}\label{antican-sec}
	The purpose of this section is to prove our results constructing (ultra) balanced 
	and ambient-balanced curves
	on anticanonical hypersurfaces. The construction is based on the following result:
	\begin{thm}\label{main}
	Suppose the exists a balanced (resp. ultra-balanced, resp. semi-balanced) 
	curve of degree $e_1$ and genus $g$ in $\P^{n-1}, n\geq 4$.
	Then for all $e$ with $(n-1)(e_1-1)\leq e\leq (n-1)e_1$ (resp. for $e=(n-1)e_1$), 
	there exists a balanced (resp. ultra-balanced, resp. semi-balanced) 
	curve of genus $g$ and degree $e$ on
	a general hypersurface of degree $n$ in $\P^n$.
	\end{thm}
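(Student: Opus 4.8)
\medskip
\noindent\emph{Proof sketch.} The plan is to degenerate the pair $(\P^n,X)$, with $X$ a general degree-$n$ hypersurface, to a reducible pair, to place on the central fibre a connected nodal curve built from the given balanced (resp. semi-balanced) curve in $\P^{n-1}$ together with a balanced ``filler'' curve supplied by Theorem \ref{mainp} and Example \ref{balanced-ex}, and then to smooth, deducing (semi-)balancedness of the general fibre curve from the degeneration, exactly as in the proof of Theorem \ref{mainp} and of \cite{caudatenormal}.

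\emph{The degeneration.} Following \cite{caudatenormal}, realise a general $X$ as the general fibre of a flat family $\mathcal X\subset\mathcal P\to\A^1$, where $\mathcal P\to\A^1$ is a fan/fang degeneration of $\P^n$ of the type used in the proof of Theorem \ref{mainp}, with central fibre $P_0=P_1\cup_E P_2$, and where $X$ specialises to $X_0=X_1\cup_W X_2$, $X_i\subset P_i$, meeting transversally along the wall $W=X_0\cap E$. The family is arranged so that $X_1$ is a blow-up of $\P^{n-1}$ (or $\P^{n-1}$ itself), carrying the given curve $C_1$ of degree $e_1$ and genus $g$, while $X_2$ is a rational component carrying balanced \emph{rational} curves as furnished by Example \ref{balanced-ex}; one puts such a curve $C_2\subset X_2$ of degree $e_2$ meeting $C_1$ transversally at a general point of $W$, so that $C_0=C_1\cup C_2$ is connected of arithmetic genus $g$ and of total degree $e$. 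As the blow-up parameter of $\mathcal P$ and the degree $e_2$ range over the values allowed by a matching/remainder condition in the spirit of Theorem \ref{mainp}, $e$ runs over exactly $[(n-2)e_1,(n-1)e_1]$; in the semi-balanced case only the single extremal configuration, with $e=(n-1)e_1$, is available.

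\emph{The normal bundle.} As in the proof of Theorem \ref{mainp} there is an exact sequence
\[0\to N_{C_0/X_0}\to N_{C_0/\mathcal X}\to T^1\to 0,\]
with $T^1=T^1_{X_0}|_{C_0}$ a length-one skyscraper at the node, where $N_{C_0/X_0}$ is obtained by gluing $N_{C_1/X_1}$ and $N_{C_2/X_2}$ along their fibres over the node; this exhibits $N_{C_0/X_0}$, up to the correction $T^1$, as an extension of a modification of $N_{C_2/X_2}$ by a modification of $N_{C_1/X_1}$. Since $N_{C_i/X_i}$ is an elementary pointwise modification at general points of $N_{C_i/\P^{n-1}}$, it is balanced by (iterated) Lemma \ref{down}, and the choices above are made so that the integer parts of the two slopes agree; Lemma \ref{match} then gives that $N_{C_0/X_0}$ is balanced. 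In particular $H^1(N_{C_0/X_0})=0$ and $N_{C_0/X_0}$ is generically generated, so after an \'etale base change $A\to\A^1$ the curve $C_0\subset X_0$ deforms to a surface over $A$ whose general fibre is a curve $C$ in a general $X$. Since surjectivity, and in the balanced case injectivity, of the restriction maps $\rho_t$ pass from $N_{C_0/X_0}$ to the general member, $C$ is (semi-)balanced, which is the assertion.

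\emph{The main obstacle.} The delicate point, just as in \cite{caudatenormal} and Theorem \ref{mainp}, is controlling $N_{C_0/X_0}$ on the singular central fibre: one must choose the gluing data along $W$ so that, after the standard deformation of the pertinent diagonal to a union $V_1\times V_2\cup\cdots$, the summand $V_1\times V_2$ still surjects onto the relevant evaluation space and, after adjoining one further general section, the kernel injects --- which is exactly where balancedness of $C_1$ and $C_2$, the equality of the integer parts of the slopes, and the remainder inequality are used. A subsidiary matter is the bookkeeping confirming that the admissible parameters realise every integer $e$ in the interval $[(n-2)e_1,(n-1)e_1]$.
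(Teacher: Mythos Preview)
Your proposal has a genuine structural gap: the central-fibre curve is not what you describe. You build $C_0=C_1\cup C_2$ with $C_2$ a single balanced rational curve in $X_2$, meeting $C_1$ at one general wall point, and then appeal to Example~\ref{balanced-ex} and Lemma~\ref{match}. But Example~\ref{balanced-ex} produces curves in $\P^n$, not in the component $X_2$, which in the relevant degeneration is a general degree-$(n-1)$ hypersurface in $\P^n$; and Lemma~\ref{match} would require a slope-matching condition between $N_{C_1/X_1}$ and $N_{C_2/X_2}$ that you do not verify and that does not obviously yield every integer $e$ in $[(n-2)e_1,(n-1)e_1]$. The diagonal-deformation argument you flag as the ``main obstacle'' is the mechanism of Theorem~\ref{mainp}, not of this theorem.

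The paper's construction is quite different and exploits the anticanonical hypothesis directly. One takes the fan with special fibre $P_1\cup P_2=B_b\P^n\cup\P^n$ and a relative hypersurface of bidegree $(n,n-1)$, so that $X_1$ is identified with $B_Y\P^{n-1}$ for a general $(n,n-1)$ complete intersection $Y=F_n\cap F_{n-1}$, and $X_2$ is a general degree-$(n-1)$ hypersurface in $\P^n$. Writing $e=(n-1)e_1-a$ with $0\le a\le e_1$, one arranges that $C_1$ meets $Y$ transversely in $a$ general points, so $N_{C_1'/X_1}$ is a general corank-$1$ down-modification of $N_{C_1/\P^{n-1}}$ at $a$ general points and is balanced by Lemma~\ref{down}. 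The remaining $e=e_1(n-1)-a$ points of $C_1\cap F_{n-1}$ lie on the wall, and through each one attaches a general line $L_i\subset X_2$, forming $C_0=C_1'\cup\bigcup_{i=1}^e L_i$. The crucial point---specific to the anticanonical case---is that $N_{L_i/X_2}$ is the \emph{trivial} bundle of rank $n-2$, so attaching the $L_i$ does not disturb balancedness: $N_{C_0/X_0}$ is balanced \emph{around $C_1'$}, and this passes to the smoothing. In the semi-balanced statement one simply takes $a=0$, so no modification of $N_{C_1/\P^{n-1}}$ is needed; this is exactly why the semi-balanced clause only gives $e=(n-1)e_1$.
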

%
\begin{proof}
	We begin with the balanced and ultra-balanced cases.
	For $g=0$ this is contained in Theorem 20 in \cite{caudatenormal}, and the
	proof for general $g$ proceeds along similar lines, modulo the constructions of the last section
	for higher-genus curves in $\P^n$.
\par	Assume to begin with that $C\subset\P^{n-1}$ is balanced 
(resp. ultra-balanced)  of degree $e_1$
	and genus $g_1$ as in Corollary \ref{ambient-cor}.
Write \[e=e_1(n-1)-a, 0\leq a\leq n-1.\]
We start with the same setup as in the proof of Theorem \ref{mainp}. Thus consider a fan 
\[\mathcal P=B_b(\P^n\times \A^1)\to \A^1\] with special fibre 
\[P_0=P_1\cup_E P_2, P_1=B_b\P^n, P_2=\P^n, E=\P^{n-1}.\] Now in $\mathcal P$ we consider a general
relative hypersurface $\mathcal X$ of type $(n, n-1)$ with special fibre 
\[X_0=X_1\cup_F X_2\]
where: $X_1$ is the blow up at $b\in\P^n$ of a general hypersurface in 
$\P^n$ of degree $n$ and multiplicity $n-1$ at $b$,
 with exceptional divisor $F$;
 and $X_2$ is a general hypersurface of degree $n-1$ in $\P^n$
 with hyperplane section $F$. Then, via projection
from $b$, $X_1$ is realized as $\P^{n-1}$ blown up at a general $(n, n-1)$ complete
intersection \[Y=F_{n-1}\cap F_n\]
where the exceptional divisor $F$ becomes the birational transform of $F_{n-1}$. 
\par Now by the discussion in Case 1 of the proof of Theorem 20 
of \cite{caudatenormal}, which uses nothing
about the genus of $C$, we may assume
$Y$ meets  $C$ 
transversely in $a$ general points $p_1,...,p_a$ and its tangents
$T_{p_i}Y$ yields general hyperplanes in the normal space $N_{C_1}(p_i), i=1,...,a$.
If $C_1, F$ denotes the birational transform of $C_1$
resp. $F_{n-1}$ in $X_1$, then
$N_{C_1/X_1}$ is a general down modification of $N_{C_1/\P^{n-1}}$
at $p_1,...,p_a$, hence it is balanced by Lemma \ref{down} 
(resp. ultra-balanced by definition). Then set
\[\{q_1,...,q_e\}=C\cap F_{n-1}\setminus\{p_1,...,p_a\}=C_1\cap F\] and
\[C_0=C_1\cup (\bigcup\limits_{i=1}^e L_i)\]
where $L_i$ is a general line in $X_2$ through $q_i$.
Because $N_{L_i/X_2}$ is a trivial bundle, it is easy to check that
$N_{C_0/X_0}$ is balanced (resp. ultra-balanced) around $C'_1$. Therefore when 
$(C_0, X_0)$ smooth out to a general $(C, X)$, $X$ a general hypersurface of degree $n$,
the normal bundle $N_{C/X}$ is likewise balanced (resp. ultra-balanced). This proves the assertion
of the Theorem in the balanced and ultra-balanced cases.\par
Note that in the above argument, if $C_1$ is semi-balanced and $a=0$, then $C_0$
is semi-balanced around $C_1'$ hence its smoothing $C$ is semi-balanced.
This proves the assertion in the semi-balanced case.
\end{proof}
Now Theorem \ref{ultra-balanced-thm} yields:
\begin{cor}\label{ultra-anticanonical-cor}
	For $n\geq 4$ a general hypersurface of degree $n$ in $\P^n$ contains ultra-balanced curves of
	genus $g$ and degree $e$ for all $e\geq 2(g+1)n(n-1)$.
	\end{cor}
\begin{rem}
	Trying to prove even semi-balancedness for $C_0$ when $e$ is not a multiple of $n-1$
	requires modifications of the normal bundle to $C_1$ and hence an assumption
	that $C_1$ be balanced, rather than weakly balanced.
	\end{rem}
A modification of this approach yields curves that are both balanced and ambient-balanced:
\begin{thm}\label{bmb-antican-thm}
	A general hypersurface of degree $n$ in $\P^n$, $n\geq 4$, contains ultra-balanced and
	 ultra ambient-balanced curves of degree $e$ and genus 
	$g$ provided $g=0, e\geq n-1$ or $g\geq 1, e\geq 4g(n-1)$.
	\end{thm}
\begin{proof} We use the construction and notations in the proof of Theorem \ref{main}.
	Given Corollary \ref{ultra-bal-Pn}, proving Theorem \ref{bmb-antican-thm}
	 is a matter of showing that the curves constructed in the latter proof 
	may be assumed ultra ambient-balanced provided $C\subset\P^{n-1}$ is. 
	We use the relative tangent bundle as 
	discussed in \S \ref{relative-sec}, so the restricted tangent bundle $T_X|_C$
	for a curve  on $ X$ specializes to
	\[T_{X_1}\mlog{E}|_{C_1}\cup T_{X_2}\mlog{E}|_{C_2}, C_1\cup C_1\subset X_1\cup X_2,\]
	where $C_2\subset X_2$ is a disjoint union of lines with trivial normal bundle.
	Now working as in Example \ref{log-on-line}, we modify the relative tangent bundle
	along $C_2$ so the specialized bundle becomes $T_{X_1}|_{C_1}\cup (n-1)\O_{C_2}$.
	Then it is clearly sufficient to show that $C_1\subset X_1$ is ultra ambient-balanced.
	But, with the above notations,  $T_{X_1}|_{C_1}$ is a general corank-1 down modification of 
	of the ultra-balanced bundle $T_{\P^{n-1}}|_C$
	at $p_1,...,p_a$,  hence is ultra-balanced.

	\end{proof}

%
\section{Curves in other Fano hypersurfaces} \label{fano-sec}
We now turn our attention to lower-degree hypersurfaces.
The purpose of this section is to prove the following
\begin{thm}\label{fano}
Let $X$ be a general hypersurface of degree $d\in[3,n-1]$ in $\P^n, n\geq 4$. Then\par
(i) $X$ contains balanced 
	  curves $C$ of degree $e$
	and genus $g$ provided there exists\nl $e_0\in [(g+1)n,e]$ such that either
	\eqspl{match-thm}{
		[\frac{-de_0+e}{n-d}]+e=e_0+[\frac{2e_0+2g-2}{d-2}].
		}
	or
	\eqspl{match1-thm}
	{\frac{-de_0+e}{n-d} +e =e_0 +\bigg\lfloor\frac{2e_0+2g-2}{d-2}\bigg\rfloor+1.}
	
	In particular given $g\geq 0$, there exist such balanced curves for every 
	$e$ in at least $(d-2)(n-d+1)$ many arithmetic progressions with difference $d(n-2)$.
	\par (ii) 
	$X$ contains ambient-balanced 
	curves $C$ of degree $e$
	and genus $g$ provided there exists $e_0\in [(g+1)n,e]$ such that
	\eqspl{match-ambient}{	
[\frac{-de_0+e}{n-d}]+e=e_0+[\frac{e_0}{d-1}]
	}
or
	\eqspl{match1-ambient}{	
	\frac{-de_0+e}{n-d}+e=e_0+[\frac{e_0}{d-1}]+1
}
In particular, given $g\geq 0$,
there exist such ambient-balanced curves curves for infinitely many $e$.

	\end{thm}
For the 'in particular' portion of (i) see the Appendix by M. C. Chang below.
\begin{rem}
	(i) Note that for $d>n/2$, eq. \eqref{match-thm} already implies $e>e_0$.
\par (ii) In light of Example \ref{ambient-example}, it is not unreasonable to expect some
obstructions in terms of  $e$ to the existence of an ambient-balanced curve of degree $e$.
\end{rem}
\begin{example}
	Solving \eqref{match-ambient} is elementary. Write
	\[e_0=\alpha(d-1)+\beta, 0\leq\beta<d-1, \alpha=[\frac{e_0}{d-1}],\]
	\[e-de_0=q(n-d)+r, 0\leq r<n-d.\]
	Then an elementary calculation yields
	\[d(d-2)\alpha+(d-1)\beta=(-q)(n-d+1)-r.\]
	This is solvable for $e$ iff
	\[(d-1)e_0-[\frac{e_0}{d-1}]\not\equiv 1\mod n-d+1.\]
Explicitly, writing 
	\[(d-1)e_0-[\frac{e_0}{d-1}]=u(n-d+1)+v, -(n-d)<v\leq 0,\]
	the solution is
	\[e=de_0-u-v.\]
	Because $u\leq{((d-1)e_0+n-d)}/{2}$, clearly $e\to\infty$ as $e_0\to\infty$ so there are
	infinitely many $e$ for given $n, d, g$.
	\end{example}
\begin{example}\label{d=n-1-example} (M. C. Chang)
	For $d=n-1$, equation \eqref{match-thm} reads
	\[2e=ne_0+[\frac{2e_0+2g-2}{n-3}].\]
	Write \[g=x(d-2)+y, e_0=(2k+r)(d-2)+c, 0\leq y,c\leq d-3, r\in\{0,1\}.\]
	Then, setting $t=[(2c+2y-2)/(d-2)]$, we get
	\[e=kd(d-1)+x+(t+r(d^2-d)+c(d+1))/2.\]
	$e$ is an integer iff $t+c(d+1)$ is even. 
	Assuming $c>0$, we have $t\in [0,3]$.
	We try to count the 'bad' pairs $(c, r)\in [1, d-3]\times [0,1]$, i.e. those
	where $t+c(d+1)$ is odd, with $y$ given. If $d$ is odd badness means $t$ is odd, i.e. $t\in\{1, 3\}$.
	The number of such $c$ is at most $d/2-1$. If $d$ is even badness means either
	$t\in\{1,3\}$, $c$ even (at most $((d/2)-1)/2$ solutions)
	or $t\in\{0,2\}$, $c$ odd (again at most $((d/2)-1)/2$ solutions).
	Thus if $d$ is either even or odd, there are at most $d/2-1$ bad $c$ values,
	hence the number of good pairs $(c, r)$ is at least
	$2(d-3-(d/2-1))=d-4$; i.e. there are at least $d-4$ good congruence classes of $e_0$ mod
	$2(d-2)$ hence at least $d-4$ distinct arithmetical progressions for $e$ with difference
	$d(d-1)$.\par
	Similarly treating eq. \eqref{match-ambient} for $d=n-1$ yields
	\[e=(ne_0+[\frac{e_0}{n-2}])/2.\]
	When $n$ is even (resp. odd), this is an integer provided  $[\frac{e_0}{n-2}]$ is even
	 (resp. the  remainder $e_0\% (n-2)$ is even). 
	 This leads to about $n-2$ (resp. $(n-3)/2$)
	arithmetic progressions of $e$ values with difference $n(n-2)$ (resp. $(n-1)^2/2$)
	for $n$ even (resp. odd).
	Note that the condition for \eqref{match-ambient} to hold is,
	in the above notations $2k+r\equiv c\mod d-1$. This yields about $d-4$ arithmetic progressions for $e$ with
difference $d(d-1)^2$.

%
%
%
	\end{example}
\begin{example}
	When are the curves produced by Theorem \ref{fano} actually \emph{perfect} ?
	For perfect balance, it is a matter of replacing
 \eqref{match-thm}  by the 'exact' equation
	\eqspl{match-exact}{	\frac{-de_0+e}{n-d}+e=e_0+\frac{2e_0+2g-2}{d-2}}
	together with the condition that both sides of \eqref{match-exact} be integers.
	This is a sufficient condition that the curve $C$ is {perfectly} balanced.
	Assume first that $d$ is odd and write 
	\eqspl{e0}{e_0=\lambda(d-2)+1-g, \lambda\in \Z.} 
	Then the condition that \eqref{match-exact} can be solved for
	an integer  $e$ is
	\[\lambda d(n-2)+n(1-g)\equiv 0\mod n-d+1\]
	or equivalently
	\eqspl{lambda}{\lambda(n+1)(n-2)+n(1-g)\equiv 0\mod n-d+1.}
	At the upper end $d=n-1$, $n$ even, \eqref{lambda} is automatic, 
	so the curves produced by Theorem \ref{fano} are always perfectly balanced. 
	A the lower end, 
	if $d=3$, eq. \eqref{lambda} becomes the condition $2-2g\equiv 0\mod n-2$.
	For $d>3$ odd, \eqref{lambda} admits an arithmetic progression of solutions $\lambda$ 
	(hence of $e$ values yielding perfectly balanced curves) provided
	\[(d, n+1)=1=(d-3, n-2)\]
	For example when  $d=5$ this holds whenever
	$n$ is odd and $n\not\equiv 4\mod 5$. 
	\par
	Similarly analyzing the case $d=2d_0$ even leads to
	\[(d^2/2-2d+1)\lambda+(d-1)(1-g)\equiv 0\mod n-d+1\]
	which admits an arithmetic progression of solutions $\lambda$ provided
	\[(d^2/2-2d+1, n-d+1)=1.\]
	Similarly treating eq. \eqref{match-ambient}, i.e. seeking $C$ that is
	perfectly ambient-balanced,  leads to
	\[e=\frac{(n-1)d}{(d-1)(n-d+1)}e_0.\]
	This is solvable at least when $(d-1)(n-d+1)|e_0$, leading to at least one
	arithmetic progression of degrees for which there exists a perfectly ambient-balanced
	curve.
	\end{example}
\begin{proof}[Proof of Theorem]
	The proof proceeds along similar lines as that of Theorem 31 of \cite{caudatenormal},
	using a relative fang. Thus let $\mathcal Z\to\A^1$ be a relative fang of type $(n,m), m=d-1\geq 2$,
	with special fibre
	\[Z_0=Z_1\cup Z_2, Z_1=\P_{\P^m}(1, 0^{n-m}), Z_2=\P_{\P^{n-m-1}}(1, 0^{m-1}).\]
	Let $\mathcal X\subset\mathcal Z$ be a general member of the linear system 
	$|dH-(d-1)Z_2|$ where $H\subset\P^n$ is a hyperplane. The $\mathcal X\to\A^1$
	has special fibre \[X_0=X_1\cup_E X_2.\] Here
	$X_1=\P_{\P^m}(G)$ where $G$ is a bundle on $\P^m$ that fits in an exact sequence
	\eqspl{g-seq}{\exseq{\O(-m)}{\O(1)\oplus (n-m)\O}{G}}
	in which the left map is general. Also  $X_2$ fibres over $\P^{n-m-1}$ with general
	fibre a general hypersurface of degree $d-1=m$ in $\P^{m+1}$. As in the above-referenced proof, 
	we will construct a balanced curve in $X_0$ of the form $C_1\cup C_2$ where 
	$C_1\subset X_1$ is balanced and $C_2\subset X_2$ is a disjoint union of lines in fibres
	of $X_2\to\P^{n-m-1}$ and as such has trivial hence balanced normal bundle.
	Then $X_0$ will smooth along with $Z_0$ to a balanced curve in the general fibre of $\mathcal X\to\A^1$.
	It will suffice to construct $C_1$.\par
	To this end, proceeding as in \cite{caudatenormal}, proof of Theorem 31, 
	we will start with a balanced curve $C_0\subset\P^m$ of genus $g$ and degree $e_0$ and lift it
	to $C_0\simeq C_1\subset \P(G)=X_1$ using a general surjection
	\eqspl{}{
	\psi:G_{C_0}\to M	
	} where $M=\O_{C_0}(H+A)$ with $L=\O(H)$ being the hyperplane bundle from $\P^m$ and
and $A$ is a general effective
divisor $A$ of degree $e-e_0$, $e_0=\deg(L)$, which also coincides with
$C_1.E$. Such a map $C_1\to X_1$ comes from
a map $\phi:C\to\P^n$ corresponding to $n+1$ sections of $L$ among which $m+1$ vanish on $A$,
and can be constructed by starting from $C_0\to\P^m$ corresponding to $m+1$
sections of $L$ and adding $n-m$ additional sections
of $M=L(A)$.\par
 Now setting $K=\ker(\psi)$,
the vertical part of the normal bundle $N_{C_1/\P(G)}$ is $K^*(M)$, i.e.
we have an exact normal sequence
\eqspl{normalseq}{\exseq{K^*(M)}{N_{C_1/\P(G)}}{N_{C_0/\P^m}}}
	and the relation \eqref{match-thm} means exactly that the slope matching condition
	of Lemma \ref{match} and \cite{caudatenormal}, eq. (10) holds. 
Thus will suffice to prove as in \cite{caudatenormal} that $K^*(M)$ is balanced. For $g=0$
this is proved in  \cite{caudatenormal}, Lemma 33. In the general case we will
use induction on $g$, starting with a reducible form of $C_0$ of the form
\eqspl{c00}{C_{00}=C_{01}\cup_{p,q}C_{02}\subset \P^m}
where $C_{01}$ is a rational normal curve (of degree $m$,
$C_{02}$ is a balanced curve of genus $g-1$ and degree $e_{02}\geq m+(g-1)(m-2)$ (see
Corollary  \ref{ambient-cor})
 and $p,q$ are general points.
We then lift $C_{00}$ to 
\eqspl{c10}{C_{10}=C_{11}\cup_{p,q}C_{12}\subset X_1}
using the surjection $\psi:G_{C_{00}}\to M_0$ to a line bundle of degree $e$
of the form $\O_{C_0}(H+A_0)$ as above.
We choose the line bundle $M_0$ on $C_{00}$  so that
\[e_1:=\deg(M_0|_{C_{01}})\equiv d(d-1)\mod n-d, e_1\geq m, e_2:=\deg(M_0|_{C_{02}})\geq (g-1)n\]
and \[e_1+e_2=e.\] 
We may assume $e_1\leq 2n$.
Now we have analogues of the sequence \eqref{normalseq}
for $C_{11}, C_{12}$ and inductively both left and right members
in those sequences have Euler slope $\geq 2$, and it follows that
\[H^1(N_{C_{1i}/X_1}(-p-q))=0, i=1,2.\]
Because $N_{C_{10}/X_1}$ contains $N_{C_{11}/X_1}(-p-q)\oplus N_{C_{12}/X_1}(-p-q)$
as a subsheaf parametrizing deformations where $C_{11}$ and $C_{12}$ deform separately
going through $p,q$, it follows easily that
$C_{10}$ is smoothable in $X_1$ to a curve of genus g and degree $e=e_1+e_2$. 
Now the bundle $K^*(M)$ restricts to the analogous bundles
on $C_{1i}, i=1,2$ which are balanced by induction
and perfect for $i=1$ by the congruence condition on $e_1$. Moreover as noted the Euler slope of 
$K^*(M)|_{C_{11}}$ is clearly at least 2.
Hence by Lemma \ref{reducible} it follows that $K^*(M)$ is balanced on $C_{10}$, hence on its smoothing in $X_1$.
\par
Finally for ambient-balancedness, 
we argue as in the proof of Theorem \ref{bmb-antican-thm}, 
noting that here again $C_2$ is a union of lines $L$ with trivial normal
bundle, hence 
\[T_{X_2}\mlog{E}|_L=\O(1)\oplus(n-2)\O_L\] where
the $(n-2)\O_L$ quotient coincides at $p=L. C_1$ with $T_{p,E}$. 
Moreover $C_2\cap C_1=A$ is a general divisor on $C_1$.
As in the above proof, it will
suffice to prove that $T_{X_1}|_{C_1}$ is balanced.
Note the exact sequence
\[\exseq{K^*(M)}{T_{\P(G)}|_{C_1}}{T_{\P^m}|_{C_1}},\]
which identifies $K^*(M)$ as the relative tangent bundle $T_{X_1/\P^m}$.
\par
\par
Now \eqref{match-ambient} ensures that the slopes of $K^*(M)$ and $T_{\P^m}|_{C_1}$ 
have the same roundoff,
so by Lemma \ref{match} it will suffice to show $K^*(M)$ and $T_{\P^m}|_{C_1}$ are balanced.
As for $T_{\P^m}|_{C_1}$, it may be assumed balanced thanks to Corollary \ref{ambient-cor}.
 As for $K^*(M)|_{C_1}$, we will use induction on $g$.
First for $g=0$, it is proven in \cite{caudatenormal},  Lemma 33, p. +35, 
that $K|_{C_1}$ is balanced,
hence so is $K^*(M)|_{C_1}$.
Then the general case is proven by degeneration to $C_{10}=C_{11}\cup C_{12}$ 
similarly to the above where $K^*(M)|_{C_{11}}$ is perfect.

\end{proof}
\begin{rem}
	The ultra version of the Matching Lemma \ref{match} is not known. Therefore neither is the
	ultra version of Theorem \ref{fano}
	\end{rem}
\begin{rem}
	There is a misprint in the proof of Lemma 33 in the journal version of \cite{caudatenormal} (p.+35, l.-11).
	The arxiv version is correct.
	\end{rem}
\vfill\eject
\bibliographystyle{amsplain}
\bibliography{../mybib}\vskip1cm
UC Math Dept. \nl
Skye Surge Facility, Aberdeen-Inverness Road,\nl
Riverside CA 92521 US\nl 
\email{ziv.ran@  ucr.edu}
\vfill\eject
%


\section*{Appendix by M. C. Chang:\\ Some roundoff equations arising from degree arithmetic}
\medskip

\medskip

$\qquad\qquad\qquad\qquad\qquad\quad$
{{By Mei-Chu Chang}}
\footnote{Research partially
	financed by the NSF Grants~DMS~1764081.}\par
\hskip0.3cm  Department of Mathematics,
UC Riverside, Riverside CA 92521 mcc AT math.ucr.edu

\bigskip
\noindent In this appendix, we prove the following

\medskip

\noindent{\bf Theorem 1.} {\it For fixed integers $3\leq d\leq n-1 $ and $g>0$, there are at least $(d-2)(n-d+1)$ 
	arithmetic progressions with difference $ d(n-2)$ of  $e$ values such that for 
	some integer $e_0$, $e\geq e_0\geq (g+1)n$, one has

\noindent either
\begin{equation}\label{(1)}\bigg\lfloor \frac{-de_0+e}{n-d}\bigg\rfloor +e =e_0 +\bigg\lfloor\frac{2e_0+2g-2}{d-2}\bigg\rfloor.\end{equation}
or
\begin{equation}\label{(1.a)}\frac{-de_0+e}{n-d} +e =e_0 +\bigg\lfloor\frac{2e_0+2g-2}{d-2}\bigg\rfloor+1.\end{equation}
 }

\bigskip

\smallskip

\noindent Our approach is similar to that of the case $g=0$ of \eqref{(1)} 
(see \cite{caudatenormal}, Appendix
by M. C. Chang). So we only provide the necessary details here.



\bigskip


\noindent We write

\begin{equation}\label{(2)} g=x(d-2) +y, \text{ where }\,\, y\in [0, d-3],  \end{equation}
\noindent and denote
\begin{equation}\label{(3)}b=n-d+1. \qquad\qquad\qquad\qquad\qquad\quad  \end{equation}

\medskip

\noindent For $(c,r)\in [0, d-3]\times [0, b-1]$, and $k\in \mathbb Z_{+}$ let

\begin{equation}\label{(4)}e_0= (kb+r)(d-2)+c. \qquad\qquad\qquad \end{equation}

\noindent Hence

\begin{equation}\label{(5)}\frac{2e_0+2g-2}{d-2}=2kb+2r+2x+\frac{2c+2y-2}{d-2}. \end{equation}
\bigskip
\noindent Denote
\begin{equation}\label{(6)} t=\bigg\lfloor\frac{2c+2y-2}{d-2}\bigg\rfloor.\qquad\qquad\qquad\qquad   \end{equation}

\smallskip

\noindent
For equation \eqref{(1)}, we let $\varepsilon$ be the fractional part of $\frac{-de_0+e}{n-d}$,

\noindent i.e.,
\begin{equation}\label{(7)}\frac{-de_0+e}{n-d} = \bigg\lfloor \frac{-de_0+e}{n-d}\bigg\rfloor +\varepsilon. \end{equation}
In particular, $\varepsilon<1$.

\bigskip

\noindent Putting displays \eqref{(1)} and \eqref{(2)}-\eqref{(7)} together, we have

\begin{equation}\label{(8)}e=d(n-2)k+2x+t+rd+c+\frac {r(d^2-3d)+c(d-1)-2x-t}{b} + \varepsilon\frac{b-1}{b}. \end{equation}

\smallskip

\noindent For equation \eqref{(1.a)}, we have

\smallskip

\begin{equation}\label{(8.a)}e=d(n-2)k+2x+t+1+rd+c+\frac {r(d^2-3d)+c(d-1)-2x-t-1}{b}. \end{equation}

\bigskip


\bigskip

\noindent {\bf Lemma 2.} {\it Let
$$\begin{aligned}&e(c,r,\epsilon)=e\\=&d(n-2)k+2x+t+rd+c+\frac {r(d^2-3d)+c(d-1)-2x-t}{b} + \varepsilon\frac{b-1}{b}\end{aligned}$$
\noindent be as in \eqref{(8)}

\smallskip

\noindent If $(c,r,\epsilon)\not=(c_1, r_1, \epsilon_1)$, then  $e(c,r, \epsilon)\not\equiv e(c_1, r_1,  \epsilon_1)\mod d(n-2).$

\medskip

\noindent The same statement is also true for $e$ in} \eqref{(8.a)}.

\bigskip

\noindent{\it Proof.} Let $$E(c,r,\epsilon)=2x+t+rd+c+\frac {r(d^2-3d)+c(d-1)-2x-t}{b} + \varepsilon\frac{b-1}{b}.$$

\medskip

\noindent {\it Claim.1.  $E(c,r,\epsilon)\not=E(c_1,r_1,\epsilon_1)$ as real numbers.}

\medskip

\noindent {\it Proof of Claim.1}.

\smallskip

\noindent
First, we assume $r_1-r\geq 1$, and $ E(c,r,\epsilon)=E(c_1,r_1,\epsilon_1)$. Then
\begin{equation}\label{9}(r_1-r)\bigg( d+\frac {d^2-3d}b\bigg)=(c-c_1)\bigg(1+\frac {d-1}b\bigg)+(t-t_1)\bigg(1-\frac 1b\bigg)+\frac {b-1}b (\epsilon-\epsilon_1). \end{equation}
By Lemma 4 below, $t-t_1\leq 2$. Also, in \eqref{(4)}, we take  $c\in[0, d-3]$. Hence, the right hand side of (9) is less than
$$(d-3)\,\frac {b+d-1}b+2\,\frac{b-1}b+\frac {b-1}b,$$
which is less than
$$ d+\frac {d^2-3d}b-\frac db< d+\frac {d^2-3d}b\leq {\rm the}\, {\rm  left}\, {\rm  hand}\, {\rm  side} \,{\rm of}\, (9).$$
This is a contradiction.

\noindent
Hence, $r_1=r$ and (9) is
\begin{equation}\label{(10)}0=(c-c_1)\bigg(1+\frac {d-1}b\bigg)+(t-t_1)\bigg(1-\frac 1b\bigg)+\frac {b-1}b (\epsilon-\epsilon_1). \end{equation}

\noindent Next, we assume $c-c_1\geq 1$. From the definition of $t$ in \eqref{(6)}, we have $t\geq t_1$, and
$$b+d-1\leq (c-c_1)(b+d-1)+(t-t_1)(b-1)=(b-1)(\epsilon_1-\epsilon)\leq b-2,$$
which is a contradiction.

\medskip

\noindent {\it Claim.2.  If $E(c_1,r_1,\epsilon_1)\not=E(c,r,\epsilon)$, then $$E(c_1,r_1,\epsilon_1)\not\equiv E(c,r,\epsilon) \mod d(n-2).$$}

\noindent {\it Proof of Claim.2.} Assume $E(c_1,r_1,\epsilon_1)>E(c,r,\epsilon)$. Then
$$\begin{aligned} E(c_1,r_1,\epsilon_1)-E(c,r,\epsilon)< & (b-1)\,\frac{bd+d^2-3d}b+ (d-3)\,\frac {b+d-1}b+2\,\frac{b-1}b+\frac {b-1}b\\
= &bd+d^2-3d-\frac{d}b\\
<&d(n-2).\end{aligned}$$
Hence $E(c_1,r_1,\epsilon_1)-E(c,r,\epsilon)$ cannot be a multiple of $d(n-2). \qquad\square$

\bigskip

\bigskip

\noindent {\it Proof of Theorem 1}.

\medskip

\noindent By Lemma 2, it is sufficient to find a lower bound on the permissible pairs of $(c,r)\in [0,d-3]\times[0,b-1]$.

\bigskip
\noindent Since $e\in \mathbb Z$, in \eqref{(8)} we let
$$\frac {r(d^2-3d)+c(d-1)-2x-t}{b} + \varepsilon\frac{b-1}{b}=m+1, \, \text { with } m\in \mathbb Z.$$

\noindent By display \eqref{(7)}, $\epsilon <1$, which is equivalent to
$$m< \frac {r(d^2-3d)+c(d-1)-2x-t-1}{b}.$$

\smallskip

\noindent
So, for equation \eqref{(1)}, we want to rule out those $(c, r)$ such that
$$ \frac {r(d^2-3d)+c(d-1)-2x-t-1}{b}=m.$$

\smallskip

\noindent
Namely, we want to rule out $(c, r) \in [0,d - 3] \times [0,b - 1]$ such that

\begin{equation}\label{(11)}r(d^2-3d)+c(d-1)\equiv 2x+t+1\mod b.\end{equation}
\smallskip

\noindent On the other hand, these $(c,r)\in [0,d - 3] \times [0,b - 1]$ satisfying \eqref{(11)} are the pairs making $e$ in \eqref{(8.a)} an integer, hence a solution of \eqref{(1.a)}. Therefore, $\forall(c,r)\in [0,d - 3] \times [0,b - 1]$ is a permissible pair for either equation \eqref{(1)} or equation \eqref{(1.a)}. $\qquad\square$

%
%
%
%
%
%
%
%
%
%

\vfill\eject

\end{document}